\newtheorem{theorem}{Theorem}[section]
\newtheorem{corollary}[theorem]{Corollary}
\newtheorem{proposition}[theorem]{Proposition}
\theoremstyle{definition}
\newtheorem{definition}[theorem]{Definition}
\newtheorem{remark}[theorem]{Remark}
\newtheorem{question}[theorem]{Question}
\newtheorem{lemma}[theorem]{Lemma}
\newtheorem{fact}[theorem]{Fact}
\newtheorem{warning}[theorem]{Warning}
\def\conv{\operatorname{conv}}
\def\tp{\operatorname{tp}}
\def\Av{\operatorname{Av}}
\def\lc{\left\lceil}   
\def\rc{\right\rceil}
\let\strokeL\L
\renewcommand\L{\mathbf{L}}
\newcommand{\floor}[1]{\left\lfloor #1 \right\rfloor}
\title{Concerning Keisler Measures over ultraproducts}
\author[K. Gannon]{Kyle Gannon}
\address{Beijing International Center for Mathematical Research (BICMR)\\
Peking University\\
Beijing, China.}
\email{kgannon@bicmr.pku.edu.cn}
\subjclass[2020]{03C45, 03C20, 03C66, 03C68}
\keywords{Keisler measures, NIP, generic stability, finitely approximated, Morley product, ultraproduct, pseudofinite}
\begin{document}

\begin{abstract} As consequence of the VC theorem, any pseudo-finite measure over an NIP ultraproduct is generically stable. We demonstrate a converse of this theorem and prove that any finitely approximable measure over an ultraproduct is itself pseudo-finite (even without the NIP assumption). We also analyze the connection between the Morley product and the pseudo-finite product. In particular, we show that if $\mu$ is definable and both $\mu$ and $\nu$ are pseudo-finite, then the Morley product of $\mu$ and $\nu$ agrees with the pseudo-finite product of $\mu$ and $\nu$. Using this observation, we construct generically stable idempotent measures on pseudo-finite NIP groups. 

\end{abstract}

\maketitle

\section{Introduction}

First, we remind the reader that Keisler measures over ultraproducts have been studied by many people in many contexts (e.g. \cite{Zoe,Zoe2,Chern2,GA,Garcia,Isaac,Hrushovski,Mac,Mal,Si,Star}).

In the NIP setting, there are several methods for constructing generically stable measures. One such method is via ultralimits. The following theorem is a consequence of the VC theorem (e.g. \cite[Example 7.32]{Sibook} or \cite[Corollary 1.3]{Si} for similar statements):

\begin{theorem}\label{fact:ultra} Let $(M_i)_{i \in I}$ be an indexed family of $\mathcal{L}$-structures, $D$ be an ultrafilter on $I$, and $\mathcal{M} := \prod_{D} M_i$ the be the ultraproduct. Suppose that $\mathcal{M}$ is NIP.  
For each $i \in I$, let $\mu_i$ be a Keisler measure on $M_i$ which concentrates on finitely many realized types. If $\mu := \lim_{D} \mu_i$ is the ultralimit (a Keisler measure on $\mathcal{M}$) then $\mu$ is finitely approximated, i.e. 
for every formula $\varphi(x,y) \in \mathcal{L}$ and $\epsilon > 0$,
 there exists $\overline{\mathbf{a}} := \mathbf{a}_1,...,\mathbf{a}_n$ in $\mathcal{M}^{x}$ such that 
\begin{equation*} 
\sup_{\mathbf{b} \in \mathcal{M}^{y}} |\mu(\varphi(x,\mathbf{b})) - \Av(\overline{\mathbf{a}})(\varphi(x,\mathbf{b}))| < \epsilon. 
\end{equation*} 
In other words, suppose that $\mathcal{M}$ is NIP and $\mu \in \mathfrak{M}_{x}(\mathcal{M})$. If $\mu$ is pseudo-finite then $\mu$ is finitely approximated. 
\end{theorem}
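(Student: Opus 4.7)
The plan is to apply the classical VC theorem pointwise to each $\mu_i$, obtaining $\epsilon/2$-approximating tuples $\bar a^i \in M_i^n$ of a common finite length $n$, and then to lift these to an approximating tuple $\bar{\mathbf{a}} \in \mathcal{M}^n$ for $\mu$ by taking ultraproducts coordinatewise. The structural input that makes this work is that the sample size $n$ in the VC theorem depends only on $\epsilon$ and the VC dimension of $\varphi$, so NIP of $\mathcal{M}$ provides uniform control across all indices $i$.

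Fix $\varphi(x,y)$ and $\epsilon > 0$. Since $\mathcal{M}$ is NIP, let $d$ be the VC dimension of $\varphi$ in $\mathcal{M}$. The statement ``$\varphi$ does not shatter $d+1$ elements'' is first-order, so by {\strokeL}o\'s's theorem there is $I_0 \in D$ such that the VC dimension of $\varphi$ in $M_i$ is at most $d$ for every $i \in I_0$. The VC theorem then furnishes, for each such $i$, a tuple $\bar a^i = (a^i_1, \dots, a^i_n) \in M_i^n$ with $n = n(\epsilon, d)$ independent of $i$, such that
\[
\sup_{b \in M_i^y}\bigl| \mu_i(\varphi(x,b)) - \Av(\bar a^i)(\varphi(x,b)) \bigr| \leq \epsilon/2.
\]
We may take the $a^i_j$ to be actual elements of $M_i$ because $\mu_i$ is supported on finitely many realized types; for $i \notin I_0$ choose $\bar a^i$ arbitrarily.

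Now set $\mathbf{a}_j := (a^i_j)_D$ for each $j \leq n$ and let $\bar{\mathbf{a}} := (\mathbf{a}_1, \dots, \mathbf{a}_n) \in \mathcal{M}^n$. For any $b = (b_i)_D \in \mathcal{M}^y$, the definition of the ultralimit of measures gives $\mu(\varphi(x,b)) = \lim_D \mu_i(\varphi(x, b_i))$, while {\strokeL}o\'s's theorem applied coordinatewise yields
\[
\Av(\bar{\mathbf{a}})(\varphi(x,b)) \;=\; \lim_D \Av(\bar a^i)(\varphi(x, b_i)).
\]
Passing the uniform bound $\epsilon/2$ through the ultralimit produces $|\mu(\varphi(x,b)) - \Av(\bar{\mathbf{a}})(\varphi(x,b))| \leq \epsilon/2 < \epsilon$ for every $b \in \mathcal{M}^y$, which is the required conclusion.

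The only real obstacle is citing the correct quantitative form of the VC theorem: the sample size must depend only on $(\epsilon, d)$, and the approximating tuple must be drawable from the support of the measure. Given such a reference (which is standard in the NIP literature), the remainder is a routine verification that ultralimits of measures are compatible with ultraproducts of the approximating parameters.
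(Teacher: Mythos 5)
Your proof is correct and is essentially the argument the paper intends: the paper states Theorem \ref{fact:ultra} without proof, citing the VC theorem literature, and the standard derivation is exactly your three steps (transfer finite VC dimension of $\varphi$ to $D$-many factors via \L o\'s, apply the quantitative VC theorem to each finitely supported $\mu_i$ to get approximating tuples of a uniform length $n(\epsilon,d)$ drawn from the finite support, and lift coordinatewise, noting that $\Av(\bar{\mathbf{a}})(\varphi(x,\mathbf{b}))=\lim_D \Av(\bar a^i)(\varphi(x,b_i))$ by \L o\'s again). The finite support of each $\mu_i$ disposes of the measurability hypotheses in the VC theorem, so there is no gap.
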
 

The first goal of this paper is to prove a converse to Theorem \ref{fact:ultra} (answering a question of Chernikov and Starchenko). We show that if $\mathcal{L}$ is countable and our ultrafilter $D$ is countably incomplete then any finitely approximated measure over our ultraproduct is actually pseudo-finite (Theorem \ref{Theorem:pseudo}). We remark that the second assumption, i.e. the ultrafilter being countably incomplete, is quite tame since one needs to assume the existence of inaccessible cardinals to have a countably complete ultrafilter.  We also remark that this result does not require an NIP assumption.

Secondly, when working over an ultraproduct $\mathcal{M}$, there are two notions of product for measures. There is the Morley product (denoted $\otimes$) and the pseudo-finite product (denoted $\boxtimes$). We remark that one runs into an immediate problem when trying to compare these two products since these operations are often defined over different collections of measures. The pseudo-finite product is defined on pairs of pseudo-finite measures while the Morley product is usually defined on pairs of global Keisler measures which are Borel-definable over a small submodel. However, if one adds a definability assumption, then one can recover a version of the Morley product \textit{over arbitrary models}. To be explicit, we prove that if $\mu$ and $\nu$ are pseudo-finite measures over an ultraproduct and $\mu$ is definable, then the Morley product and pseudo-finite product agree (Theorem \ref{MT2}). As a consequence, one can show that if $\nu$ is also definable, then $\mu \otimes \nu = \nu \otimes \mu$ (Corollary \ref{fun}). We then apply local versions of these results to demonstrate that certain measures are not pseudo-finite over ultraproducts of the Paley graphs (Theorem \ref{Paley}). Finally, we return to the classical Keisler measure setting (measures over a monster model) and use Theorems \ref{fact:ultra} and \ref{MT2} to build generically stable idempotent measures in the context of pseudo-finite NIP groups (Theorem \ref{gsim}).

We begin with a preliminaries section. The rest of the paper follows the outline presented in the preceding paragraphs.

\subsection*{Acknowledgements} We thank Artem Chernikov for both specific and general comments as well as discussion.

\section{Preliminaries} 
If $r$ and $s$ are real numbers and $\epsilon$ is a real number greater than $0$, then we write $r \approx_{\epsilon} s$ to mean $|r - s| < \epsilon$ and $r \approxeq_{\epsilon} s$ to mean $|r - s| \leq \epsilon$.

For the most part, our notation is standard. The symbol $\mathcal{L}$ denotes a first order language. The letters $x,y,z$ denote finite tuples of variables. We use the letter $M$ to denote an arbitrary $\mathcal{L}$-structure. If $A \subseteq M$, then we let $\mathcal{L}(A)$ be the collection of formulas with parameters from $A$ (modulo semantic equivalence, i.e. two formulas $\varphi(x)$ and $\psi(x)$ are identified if they define the same definable subsets of $M^{x}$). A formula in $\mathcal{L}(A)$ is called an ``$\mathcal{L}(A)$-formula''. An $\mathcal{L}$-formula is a $\mathcal{L}(\emptyset)$-formula. We use $\mathcal{L}_{x}(A)$ to denote the $\mathcal{L}(A)$-formulas with free variable(s) $x$. We write partitioned formulas as $\varphi(x,y)$ with \textit{variable} variables $x$ and \textit{parameter} variables $y$. We let $\varphi^{*}(y,x)$ denote the exact same formula as $\varphi(x,y)$ but with the \emph{variable} variables and \emph{parameter} variables swapped.  

Unlike similar papers about Keisler measures, we do not identify a type and its corresponding Keisler measure. Let $A$ be a subset of a model $M$. We let $S_{x}(A)$ denote the space of types over $A$ (in variable(s) $x$). We let $\mathfrak{M}_{x}(A)$ denote the space of Keisler measures over $A$ (in variable(s) $x$). For any (tuple of) variable(s) $x$, and any subset $A \subseteq M$, we have a map $\delta: S_{x}(A) \to \mathfrak{M}_{x}(M)$ via $\delta(p) = \delta_{p}$ where $\delta_{p}$ is the \textit{Dirac measure at the type $p$}. We sometimes refer to $\delta_{p}$ as the \textit{corresponding Keisler measure} of $p$. If $\overline{a} = a_1,...,a_n$ is a sequence of points in $M^{x}$, then we let $\Av(\overline{a})$ be the associated average measure in $\mathfrak{M}_{x}(M)$. Explicitly, for any $\psi(x) \in \mathcal{L}_{x}(M)$, we define
 \begin{equation*}\Av(\overline{a})(\psi(x)) = \frac{|\{1\leq i \leq n: \mathcal{U} \models \psi(a_i)\}|}{n}.
 \end{equation*} 
 Moreover we let
 \begin{equation*} 
 \conv_{x}(M) := \Big\{ \sum_{i=1}^n r_i \delta_{\tp(a_i/M)}: a_i \in M^{x}, n \in \mathbb{N}_{\geq 1}, \sum_{i=1}^n r_i = 1, r_i > 0\Big\} \subseteq \mathfrak{M}_{x}(M). 
 \end{equation*}
 
\subsection{Ultraproducts, ultralimits and pseudo-finite measures} We first discuss our conventions on ultraproducts. Let $I$ be an (infinite) indexing set, $(M_i)_{i \in I}$ be an indexed family of $\mathcal{L}$-structures, and $D$ be an ultrafilter on $I$. $D$ will always denote a non-principal ultrafilter. We denote the ultraproduct of $(M_i)_{i \in I}$ relative to $D$ as $\prod_{D} M_i$. In practice, we will usually write ultraproducts as $\mathcal{M}$ when the indexing set, sequence of models, and ultrafilter are unambiguous. We will write elements of $\mathcal{M}$ as $\mathbf{a},\mathbf{b},\mathbf{c}$, etc. If $(c_i)_{i \in I}$ is an indexed family of elements such that $c_i \in M_i$, then we let $[(c_i)]_{D}$ denote the corresponding element in $\mathcal{M}$, i.e. the equivalence class of $(c_i)_{i \in I}$ modulo $D$. If $\mathbf{b} \in \mathcal{M}$, then we write $(\mathbf{b}(i))_{i \in I}$ or simply $\mathbf{b}(i)$ to denote a (choice of a) sequence such that each $\mathbf{b}(i) \in M_i$ and $[\mathbf{b}(i)]_{D} = \mathbf{b}$. 

We recall the definition of an ultralimit. Since this paper is primarily concerned with finitely additive measures, we will restrict our definition to the case of ultralimits of indexed families of real numbers.

\begin{definition} Let $(r_i)_{i \in I}$ be a bounded indexed family of real numbers and let $D$ be an ultrafilter on $I$. Then the ultralimit of $(r_i)_{i \in I}$ (with respect to $D$) is the unique real number $r$ such that for every $\epsilon > 0$, $\{i \in I: |r_i - r| \leq \epsilon \} \in D$. We denote this ultralimit as $\lim_{D} r_i$. 
\end{definition} 

We remind the reader that ultralimits of bounded indexed families of real numbers always exist. The following facts are elementary. 

\begin{fact}\label{computation} Let $r_1,...,r_n$ be real numbers, let $(s_i^{1})_{i \in I},...,(s_{i}^{n})_{i \in I}$ be sequence of real numbers, and $D$ be an ultrafilter on $I$. Then the following hold:
\begin{enumerate} 
\item $|\lim_{D} s^{1}_i| = \lim_{D} |s^{1}_i|$. 
\item $\sum_{j=1}^{n} r_i \lim_{D} s_i^{j} = \lim_{D} \sum_{j=1}^{n} r_i \cdot s_i^{j}.$
\item If there exists $A \in D$ such that for any $i \in A$, $s_i^{1} \geq r_1$, then $\lim_{D} s_i^{1} \geq r_1$. 
\item If there exists $A \in D$ such that for any $i \in A$, $s_i^{1} \leq r_1$, then $\lim_{D} s_i^{1} \leq r_1$. 
\item If there exists $A \in D$ such that for any $i \in A$, $|s_i^{1} - s_i^{2}| \leq \epsilon$,
 then 
 \begin{equation*} \lim_{D} s_i^{1} \approxeq_{\epsilon} \lim_{D} s_i^{2}. 
 \end{equation*}
\end{enumerate} 
\end{fact}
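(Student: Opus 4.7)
The five items are elementary and all follow by unwinding the definition of ultralimit together with the fact that a finite intersection of members of $D$ lies in $D$. I would take them in the order $(3), (4), (1), (2), (5)$ since $(5)$ will be deduced cleanly from $(1)$--$(3)$.

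For $(3)$, I would argue by contradiction. Suppose $A := \{i \in I : s_i^{1} \geq r_1 \} \in D$ but $r := \lim_{D} s_i^{1} < r_1$. Set $\epsilon := (r_1 - r)/2 > 0$. By the definition of ultralimit, $B := \{i : |s_i^1 - r| \leq \epsilon\} \in D$, so $A \cap B \in D$, in particular $A \cap B \neq \emptyset$. For any $i \in A \cap B$ we have $s_i^1 \leq r + \epsilon < r_1 \leq s_i^1$, a contradiction. Item $(4)$ is symmetric. Item $(1)$ follows from the reverse triangle inequality: if $r = \lim_D s_i^1$, then $\{i : |s_i^1 - r| \leq \epsilon\} \subseteq \{i : ||s_i^1| - |r|| \leq \epsilon\}$, and since the former lies in $D$ so does the latter; hence $\lim_D |s_i^1| = |r|$.

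For $(2)$, fix $\epsilon > 0$ and set $t_j := \lim_D s_i^j$. Let $R := 1 + \sum_{j=1}^n |r_j|$. For each $j$ the set $A_j := \{i : |s_i^j - t_j| \leq \epsilon/R\}$ lies in $D$, hence $A := \bigcap_{j=1}^n A_j \in D$. For any $i \in A$, the triangle inequality gives $\bigl|\sum_j r_j s_i^j - \sum_j r_j t_j\bigr| \leq \sum_j |r_j| \cdot \epsilon/R \leq \epsilon$. Uniqueness of the ultralimit then forces $\lim_D \sum_j r_j s_i^j = \sum_j r_j t_j$. Finally, $(5)$ follows by combining the pieces: by $(2)$, $\lim_D (s_i^1 - s_i^2) = \lim_D s_i^1 - \lim_D s_i^2$; by $(1)$, $|\lim_D(s_i^1 - s_i^2)| = \lim_D |s_i^1 - s_i^2|$; and by the hypothesis together with $(4)$, $\lim_D |s_i^1 - s_i^2| \leq \epsilon$.

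There is no real obstacle here, so the main care is bookkeeping the finitely many sets in $D$ (their intersection must remain in $D$) and choosing the right $\epsilon/R$ scaling in $(2)$ so the triangle inequality closes up. Everything else is just the definition of $\lim_D$.
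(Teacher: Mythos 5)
Your proof is correct; the paper states these as elementary facts and leaves them unproved, and your argument is exactly the standard one (finite intersections of sets in $D$, uniqueness of the ultralimit, and the triangle inequality). The only nit is that your roadmap promises to deduce $(5)$ from $(1)$--$(3)$ but the argument actually (and correctly) invokes $(4)$ for the final inequality $\lim_{D}|s_i^{1}-s_i^{2}|\leq\epsilon$.
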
 

We now recall the definition of an ultralimit of a family of measures as well as the definition of a pseudo-finite measure/type. We remark that the ultralimit can be defined on the sigma-algebra generated by internal subsets of the ultraproduct (Loeb measure construction). However, since we only care about Keisler measures, we restrict our discussion to definable subsets of the ultraproduct. 

\begin{definition} Let $(M_i)_{i \in I}$ be a family of $\mathcal{L}$-structures and $D$ be an ultrafilter on $I$. Let $\mathcal{M} := \prod_{D} M_i$ and for each $i$, we let $\mu_i \in \mathfrak{M}_{x}(M_i)$. The \textbf{ultralimit} of $(\mu_i)_{i \in I}$ (relative to $D$) is the unique measure $\mu \in \mathfrak{M}_{x}(\mathcal{M})$ such that for every formula $\varphi(x, \mathbf{b}) \in \mathcal{L}_{x}(\mathcal{M})$ we have that 
\begin{equation*} \mu(\varphi(x,\mathbf{b})) = \lim_{D} \mu_{i}(\varphi(x,\mathbf{b}(i))). 
\end{equation*} 
We usually denote the measure $\mu$ defined above as $\lim_{D} \mu_i$. Moreover we say that a measure $\mu \in \mathfrak{M}_{x}(\mathcal{M})$ is \textbf{pseudo-finite} (with respect to the indexed family $(M_i)_{i \in I}$ and ultrafilter $D$) if there exists an indexed family of measures $(\mu_i)_{i \in I}$ such that $\mu_i \in \conv_{x}(M_i)$ and $\lim_{D} \mu_i = \mu$. Finally, we say that the type $p$ is \textbf{pseudo-finite} if the corresponding Keisler measure $\delta_{p}$ is pseudo-finite. 
\end{definition} 

The following fact is straightforward and left to the reader as an exercise. 

\begin{fact}\label{prop:well}The ultralimit $\lim_{D} \mu_i$ in the above definition is well-defined. 
\end{fact}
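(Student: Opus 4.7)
The plan is to verify two things in turn: (i) that for any $\mathcal{L}_x(\mathcal{M})$-formula $\varphi(x,\mathbf{b})$ the prescription $\mu(\varphi(x,\mathbf{b})):=\lim_D\mu_i(\varphi(x,\mathbf{b}(i)))$ does not depend on the choice of representative sequence $(\mathbf{b}(i))_{i\in I}$ or on the syntactic presentation of the formula, and (ii) that the resulting set function is a Keisler measure, i.e.\ satisfies normalization and finite additivity on $\mathcal{L}_x(\mathcal{M})$. The existence of the ultralimit itself is not an issue since each $\mu_i$ takes values in $[0,1]$, so the relevant sequences are bounded and Fact \ref{computation} applies.

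For (i), the key tool is Łoś's theorem. Suppose $\varphi(x,\mathbf{b})$ and $\psi(x,\mathbf{c})$ define the same subset of $\mathcal{M}^x$, with representative sequences $\mathbf{b}(i)$ and $\mathbf{c}(i)$. Then $\mathcal{M}\models\forall x(\varphi(x,\mathbf{b})\leftrightarrow\psi(x,\mathbf{c}))$, so by Łoś the set
\begin{equation*}
A=\{i\in I: M_i\models\forall x(\varphi(x,\mathbf{b}(i))\leftrightarrow\psi(x,\mathbf{c}(i)))\}
\end{equation*}
belongs to $D$. For every $i\in A$ the formulas $\varphi(x,\mathbf{b}(i))$ and $\psi(x,\mathbf{c}(i))$ define the same subset of $M_i^x$, so $\mu_i(\varphi(x,\mathbf{b}(i)))=\mu_i(\psi(x,\mathbf{c}(i)))$. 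Invoking Fact \ref{computation}(5) with $\epsilon=0$ (or directly the definition of ultralimit) yields equality of the two ultralimits. In particular, taking $\varphi=\psi$ handles independence of representative for $\mathbf{b}$, and taking different syntactic presentations handles well-definedness on $\mathcal{L}_x(\mathcal{M})$ modulo logical equivalence over $\mathcal{M}$.

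For (ii), normalization is immediate: $\mu_i(x=x)=1$ for all $i$, so $\mu(x=x)=1$. For finite additivity, fix formulas $\varphi(x,\mathbf{b})$, $\psi(x,\mathbf{c})$ with $\mathcal{M}\models\neg\exists x(\varphi(x,\mathbf{b})\wedge\psi(x,\mathbf{c}))$. By Łoś's theorem, the set $B$ of indices $i$ at which $\varphi(x,\mathbf{b}(i))$ and $\psi(x,\mathbf{c}(i))$ have empty intersection in $M_i^x$ lies in $D$. For $i\in B$, finite additivity of $\mu_i$ gives
\begin{equation*}
\mu_i\bigl(\varphi(x,\mathbf{b}(i))\vee\psi(x,\mathbf{c}(i))\bigr)=\mu_i(\varphi(x,\mathbf{b}(i)))+\mu_i(\psi(x,\mathbf{c}(i))).
\end{equation*}
Applying Fact \ref{computation}(2) (linearity of ultralimits) and Fact \ref{computation}(5) to replace the full-$I$ sequence by one that agrees with it on $B\in D$, the ultralimit of the left-hand side equals the sum of the two ultralimits on the right, which is exactly $\mu(\varphi(x,\mathbf{b}))+\mu(\psi(x,\mathbf{c}))$.

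No step is a genuine obstacle; the entire argument is a careful bookkeeping exercise combining Łoś's theorem (to transfer Boolean identities from $\mathcal{M}$ to a $D$-large set of indices) with the elementary properties of ultralimits collected in Fact \ref{computation}. The only point requiring any care is remembering that equalities of definable sets in $\mathcal{M}$ only hold \emph{$D$-almost everywhere} in the $M_i$, which is why one needs Fact \ref{computation}(5) rather than pointwise equality of sequences.
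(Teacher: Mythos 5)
Your proof is correct. The paper states this fact without proof ("straightforward and left to the reader as an exercise"), and your argument --- existence of the ultralimit from boundedness, independence of the representative sequence and of the syntactic presentation via {\L}o\'s's theorem transferring the equivalence $\forall x(\varphi(x,\mathbf{b})\leftrightarrow\psi(x,\mathbf{c}))$ to a $D$-large set of indices, and then normalization and finite additivity via Fact \ref{computation} --- is exactly the routine verification the author intends.
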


Finally we recall the definition of a countably incomplete ultrafilter. 

\begin{definition} Let $I$ be an indexing set and $D$ an ultrafilter on $I$. We say that $D$ is countably incomplete if there exists a subset $\{X_i: i \in \mathbb{N}\}$ of $D$ such that $\bigcap_{i \in \mathbb{N}} X_i = \emptyset$. 
\end{definition}

\subsection{Working over models.} A lot of literature dealing with Keisler measures focuses on global measures, i.e. those which are defined over a monster model. On the other hand this paper is concerned with ultraproducts which are generally not monster models. Therefore we need to set some conventions. 

\begin{definition} Let $M$ be any model of $T$, $p \in S_{x}(M)$ and $\mu \in \mathfrak{M}_{x}(M)$. 
\begin{enumerate}
\item We say that $p$ is $M$-\textbf{definable} if for every $\mathcal{L}$-formula $\varphi(x,y)$, there exists a formula $d^{\theta}_{p}(y) \in \mathcal{L}_{y}(M)$ such that for any $b \in M$, 
\begin{equation*} 
\theta(x,b) \in p \iff M \models d^{\theta}_{p}(b). 
\end{equation*} 
When $M$ is obvious, we sometimes simply write that $p$ is definable. 
\item We say that $\mu$ is $M$-\textbf{definable} if for every $\mathcal{L}$-formula $\varphi(x,y)$, there exists a continuous function $F^{\varphi}_{\mu,M}:S_{y}(M) \to [0,1]$ such that for every $b \in M^{y}$, $F^{\varphi}_{\mu,M}(\tp(b/M^{y})) = \mu(\varphi(x,b))$. We remark that if such a continuous function exists, then it is the unique continuous function with this property. When $M$ is obvious, we sometimes simply write that $\mu$ is definable. 
\item We say that $\mu$ is \textbf{finitely approximated} if for every partitioned $\mathcal{L}$-formula $\varphi(x,y)$ and every $\epsilon > 0$, there exists a sequence $a_{1},...,{a}_n \in M^{x}$ such that 
\begin{equation*}
    \sup_{b \in M^{y}}|\mu(\varphi(x,b)) - \Av(\overline{a})(\varphi(x,b))| < \epsilon.
\end{equation*}
\item We say that $p$ is \textbf{finitely approximated} if the corresponding Keisler measure $\delta_{p}$ is finitely approximated. 
\end{enumerate} 
\end{definition} 

One might (and should) wonder why we are restricting ourselves to $M$-\textit{definable} measures and not working with ``$M$-\textit{Borel-definable}" measures. This is because being Borel-definable does not have such a good ``over-a-model" analogue. We discuss why in the warning below.

\begin{warning} One may be tempted to define \textit{Borel-definable over a model} in a similar way to definability. A plausible definition might be the following: We say that a measure $\mu \in \mathfrak{M}_{x}(M)$ is \textit{M-Borel-definable$^*$} if for every formula $\varphi(x,y) \in \mathcal{L}(M)$, there exists a unique Borel function $f_{\mu}^{\varphi}: S_{y}(M) \to [0,1]$ such that for any $b \in M^{y}$, $f_{\mu}^{\varphi}(\tp(b/M)) = \mu(\varphi(x,b))$. The reason why we require uniqueness is so that we can construct a unique \textit{Morley product} with any measure in $\mathfrak{M}_{y}(M)$ (see Definition \ref{def:morley}). 

However, a measure $\mu$ can have \emph{multiple Borel-definable extensions}. In particular, there exists a model $M$ and a measure $\mu \in \mathfrak{M}_{x}(M)$ such that $\mu$ is $M$-definable but not $M$-Borel-definable$^*$. Consider $M := (\mathbb{R},<)$ and $p$ is the unique complete type extending $\{x > a: a \in \mathbb{R}\}$. The measure $\mu = \delta_{p}$ is $\mathbb{R}$-definable but not $\mathbb{R}$-Borel-definable$^*$ essentially because both the global heir and global coheir are Borel-definable extensions of $p$ and so uniqueness fails. 
\end{warning}

The following facts are straightforward and left to the reader as an exercise. 

\begin{fact}\label{big:fact} Let $M$ be any model of $T$, $p \in S_{x}(M)$, and $\mu \in \mathfrak{M}_{x}(M)$.
\begin{enumerate} 
\item The type $p$ is definable if and only if the measure $\delta_{p}$ is definable. 
\item If $\mu$ is finitely approximated, then $\mu$ is definable.
\item If $p$ is finitely approximated, then $p$ is definable. 
\end{enumerate} 
\end{fact}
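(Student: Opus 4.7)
My plan for part (1) is to exploit that the Dirac measure $\delta_p$ is $\{0,1\}$-valued. For the forward direction, a defining formula $d^\varphi_p(y) \in \mathcal{L}_y(M)$ for $p$ determines the clopen set $[d^\varphi_p] \subseteq S_y(M)$, whose indicator function is a continuous map $S_y(M) \to \{0,1\} \subseteq [0,1]$ that realizes $\delta_p(\varphi(x,\cdot))$ on every realized type. Conversely, suppose $F = F^\varphi_{\delta_p, M}$ exists. Because $\delta_p(\varphi(x,b)) \in \{0,1\}$ and realized types are dense in $S_y(M)$, $F$ takes values in $\{0,1\}$ on a dense subset; continuity forces $F$ to be $\{0,1\}$-valued throughout, so $F^{-1}(1)$ is clopen in $S_y(M)$ and thus equals $[\psi(y)]$ for some $\mathcal{L}_y(M)$-formula $\psi$, which serves as the desired defining formula.

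For part (2), I will construct $F^\varphi_{\mu, M}$ as a uniform limit of ``counting'' functions coming straight from the finite approximations. Fix a partitioned formula $\varphi(x,y)$ and, for each $n \geq 1$, choose $\bar{a}^n = (a^n_1, \dots, a^n_{k_n}) \in M^x$ witnessing
\begin{equation*}
\sup_{b \in M^y} |\mu(\varphi(x,b)) - \Av(\bar{a}^n)(\varphi(x,b))| < 1/n.
\end{equation*}
Define $G_n: S_y(M) \to [0,1]$ by $G_n(q) = \frac{1}{k_n}|\{i : \varphi(a^n_i, y) \in q\}|$; each $G_n$ is locally constant (since each $[\varphi(a^n_i, y)]$ is clopen), hence continuous, and on realized types $G_n(\tp(b/M)) = \Av(\bar{a}^n)(\varphi(x,b))$. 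Therefore $|G_n - G_m| \leq 1/n + 1/m$ on the dense set of realized types, hence on all of $S_y(M)$ by continuity of the $G_n$. So $(G_n)$ is uniformly Cauchy in the space of continuous $[0,1]$-valued functions on $S_y(M)$, and its uniform limit is a continuous function that realizes $\mu(\varphi(x,\cdot))$ on realized types; this will be $F^\varphi_{\mu, M}$.

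Part (3) is then immediate from (1) and (2): finite approximability of $p$ is by definition finite approximability of $\delta_p$, which gives definability of $\delta_p$ by part (2), and hence definability of $p$ by part (1).

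The only mild pinch point is verifying in part (2) that the constructed uniform limit actually realizes $\mu(\varphi(x,b))$ at each realized type; this is handled uniformly by the density of realized types in $S_y(M)$ combined with the continuity of the $G_n$, so no type-by-type analysis is needed. Everything else reduces to the standard correspondence between clopen subsets of Stone spaces and $\mathcal{L}(M)$-formulas (modulo logical equivalence) and to the fact that $\delta_p$ only takes values in $\{0,1\}$.
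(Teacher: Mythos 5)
Your proof is correct, and since the paper leaves this fact as an exercise, your argument is exactly the intended standard one: part (1) via the correspondence between clopen subsets of $S_y(M)$ and $\mathcal{L}_y(M)$-formulas together with the $\{0,1\}$-valuedness of $\delta_p$ (noting that density of realized types plus continuity forces $F$ to be $\{0,1\}$-valued everywhere), part (2) via the uniform Cauchy sequence of locally constant averaging functions, and part (3) by composing the two. All the pinch points you flag (density of realized types in $S_y(M)$, transferring the sup bound from realized types to the whole space by continuity) are handled correctly.
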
 

\begin{proposition}\label{fam:uniform} Suppose that $\mu$ is finitely approximated. Then for any finite collection of $\mathcal{L}$-formulas $\theta_1(x,y_1),..., \theta_n(x,y_{n})$, there exists $\overline{a} := a_1,...,a_n \in M^{x}$ such that 
\begin{equation*} \sup_{c \in M^{y_i}} |\mu(\theta_{i}(x,c)) - \Av(\overline{a})(\theta_i(x,c))| < \frac{1}{n}. 
\end{equation*} 
\end{proposition}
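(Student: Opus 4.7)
The strategy is to reduce uniform approximation over a family of formulas to the single-formula finite-approximation hypothesis, via a ``selector'' trick that packages the $\theta_i$ into one $\mathcal{L}$-formula. Assuming $M$ contains $n$ distinct elements (otherwise the model is so small that the conclusion can be verified directly), pick distinct $e_1, \dots, e_n \in M$, introduce fresh single variables $z_1, \dots, z_n, w$, and consider the $\mathcal{L}$-formula
\[
\Theta(x, y_1, \dots, y_n, z_1, \dots, z_n, w) := \bigvee_{i=1}^n \bigl(\theta_i(x, y_i) \wedge w = z_i\bigr).
\]

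Next, apply the finite-approximation hypothesis to the single formula $\Theta$ with tolerance $1/n$ to extract a sequence $\overline{a}$ in $M^x$ satisfying
\[
\sup_{\tilde c}\, \bigl|\mu(\Theta(x, \tilde c)) - \Av(\overline{a})(\Theta(x, \tilde c))\bigr| < 1/n,
\]
where $\tilde c$ ranges over all parameter tuples of the appropriate sort. To recover the promised conclusion, fix $i$ and an arbitrary $c \in M^{y_i}$, and specialize the parameter tuple by taking $z_j = e_j$ for all $j$, $w = e_i$, $y_i = c$, and any fixed values for the remaining $y_j$. Since the $e_j$ are distinct, the equality $w = z_j$ holds only for $j = i$, so the disjunction collapses and $\Theta$ becomes literally $\theta_i(x, c)$. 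Plugging this specific $\tilde c$ into the uniform bound yields $|\mu(\theta_i(x,c)) - \Av(\overline{a})(\theta_i(x,c))| < 1/n$, and since $i$ and $c$ were arbitrary, the same $\overline{a}$ works for all of $\theta_1, \dots, \theta_n$ simultaneously.

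There is no substantial obstacle once the selector formula $\Theta$ is written down; the argument is essentially bookkeeping. The only mildly delicate point is the degenerate case where $M$ is too small to house $n$ distinct selectors, which is handled by direct inspection since $M$ then admits only finitely many $\mathcal{L}(M)$-instances and any $\mu$ can be approximated by a sufficiently long enumeration of $M^x$.
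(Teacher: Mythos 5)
Your proof is correct and is essentially the paper's own argument: both reduce to the single-formula hypothesis by encoding $\theta_1,\dots,\theta_n$ into one selector formula (the paper uses a conjunction of guarded implications $\bigwedge_i ((z_*=z_i \wedge \bigwedge_{j\neq i} z_j \neq z_*) \to \theta_i(x,y_i))$ where you use the equivalent disjunction $\bigvee_i(\theta_i(x,y_i)\wedge w=z_i)$) and then specialize the selector parameters to distinct elements. Your explicit treatment of the case where $M$ has fewer than $n$ elements is a point of care the paper's proof omits.
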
 

\begin{proof} Follows from a standard encoding argument. Consider the new variables $z_*,z_1,...,z_n$ where $|z_*| = |z_i| = 1$. Now let $\bar{y} = (y_0,y_1,...,z_*,z_1,...,z_n)$ and define the formula
\begin{equation*}
    \gamma(x,\bar{y}) : =  \bigwedge_{i \leq n}\left( \left( z_* = z_i \wedge \bigwedge_{j \leq n ; j \neq i}^{n} z_j \neq z_*  \right)  \to \theta_i(x,y_i) \right). 
\end{equation*} By finite approximability, there exists $\overline{a} \in (M^{x})^{<\omega}$ such that for any $\bar{b} \in \mathcal{U}^{\bar{y}}$, 
\begin{equation*} |\mu(\gamma(x,\bar{b})) - \Av(\overline{a})(\gamma(x,\bar{b}))| < \frac{1}{n}. 
\end{equation*}
We claim $\Av(\overline{a})$ satisfies the condition. 
\end{proof}

\section{Finitely Approximated implies pseudo-finite}

The purpose of this section is to prove a (partial) converse to Theorem \ref{fact:ultra}. We will show that if a measure $\mu$ on an ultraproduct $\mathcal{M}$ is finitely approximated, then $\mu$ is pseudo-finite (modulo some minimal assumptions). Our result is partial since we require our language to be countable and our ultrafilter to be countably incomplete. However this result does not require an NIP assumption. For clarity, we first provide a version of the proof for types over a countable index set before proving the general case for measures. The measure case is structurally the same as the type case, however we feel that the type case is pedagogically helpful.

\begin{theorem}\label{types} Let $\mathcal{L}$ be a countable language, $I = \mathbb{N}$, $D$ be a non-principal ultrafilter on $I$, and $(M_i)_{i \in I}$ be an indexed family of $\mathcal{L}$-structures.  Let $\mathcal{M} = \prod_{D}M_i$ and $p \in S_{x}(\mathcal{M})$. If $\delta_p$ is finitely approximated then $\delta_{p}$ is pseudo-finite. 
\end{theorem}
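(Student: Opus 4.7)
The plan is to diagonalize across uniform finite approximations of $\delta_p$, using both the countable incompleteness of $D$ (which is automatic for any non-principal ultrafilter on $\mathbb{N}$) and the fact that $\delta_p$ is $\mathcal{M}$-definable, as granted by Fact \ref{big:fact}. Enumerate the countably many partitioned $\mathcal{L}$-formulas as $\varphi_1(x, y_1), \varphi_2(x, y_2), \ldots$. For each $n \geq 1$, Proposition \ref{fam:uniform} applied to $\varphi_1, \ldots, \varphi_n$ produces a finite tuple $\bar{a}_n = (\mathbf{a}_{n,1}, \ldots, \mathbf{a}_{n,k_n}) \in \mathcal{M}^x$ satisfying
\[
|\delta_p(\varphi_k(x, \mathbf{b})) - \Av(\bar{a}_n)(\varphi_k(x, \mathbf{b}))| < 1/n
\]
for all $k \leq n$ and all $\mathbf{b} \in \mathcal{M}^{y_k}$. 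Fix representatives $a_{n,j}(i) \in M_i^x$ of each $\mathbf{a}_{n,j}$ and set $\nu_i^n := \Av(a_{n,1}(i), \ldots, a_{n,k_n}(i)) \in \conv_x(M_i)$. The eventual candidate will be $\mu_i := \nu_i^{n(i)}$ for a suitably diagonal function $n : I \to \mathbb{N}$.

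The principal obstacle is that the naive choice --- pick a decreasing sequence $Y_n \in D$ with $\bigcap Y_n = \emptyset$ (countable incompleteness) and set $n(i) := \max\{n : i \in Y_n\}$ --- does not obviously give $\lim_D \mu_i = \delta_p$: the usual \L os-style argument, applied to a fixed $\varphi_k(x, \mathbf{b})$, produces a ``good'' $D$-set that depends on $\mathbf{b}$, and there are uncountably many $\mathbf{b} \in \mathcal{M}^{y_k}$ to control with a single sequence $(n(i))$. The way around this is to exploit definability. Since $\delta_p$ is finitely approximated it is $\mathcal{M}$-definable by Fact \ref{big:fact}, and because it is Dirac its defining function $F^{\varphi_k} : S_{y_k}(\mathcal{M}) \to \{0, 1\}$ is a clopen indicator, hence realized by a defining formula $d_p^{\varphi_k}(y) = \psi_k(y, \mathbf{e}_k)$ with $\psi_k \in \mathcal{L}$ and $\mathbf{e}_k \in \mathcal{M}^{z_k}$; fix representatives $e_k(i) \in M_i^{z_k}$ of $\mathbf{e}_k$.

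The key observation is then that the uniform approximation
\[
\forall y \in \mathcal{M}^{y_k}, \quad |\Av(\bar{a}_n)(\varphi_k(x, y)) - \mathbf{1}[\psi_k(y, \mathbf{e}_k)]| < 1/n
\]
is a single $\mathcal{L}$-sentence $\Phi_{n,k}$ in the parameters $\bar{a}_n$ and $\mathbf{e}_k$ (the quantity $\Av(\bar{a}_n)(\varphi_k(x, y))$ is a Boolean-definable counting function of $y$, and $\forall y$ is an ordinary first-order quantifier), and it holds in $\mathcal{M}$. By \L os, $\Phi_{n,k}$ transfers to $M_i$ on a set $G_{n,k} \in D$. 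Now refine the diagonal by setting $X_n := Y_n \cap \bigcap_{m, k \leq n} G_{m,k} \in D$, which is a finite intersection of $D$-sets and remains decreasing with empty intersection. Define $n(i) := \max\{n : i \in X_n\}$ and $\mu_i := \nu_i^{n(i)} \in \conv_x(M_i)$. To verify $\lim_D \mu_i = \delta_p$, fix $\varphi_k(x, \mathbf{b})$ and $\epsilon > 0$, pick $N \geq k$ with $1/N < \epsilon$, and observe that for $i \in X_N$ we have $n(i) \geq N$ and $i \in X_{n(i)} \subseteq G_{n(i), k}$; so specializing $\Phi_{n(i), k}$ to $y = \mathbf{b}(i)$ inside $M_i$ yields $|\mu_i(\varphi_k(x, \mathbf{b}(i))) - \mathbf{1}[M_i \models \psi_k(\mathbf{b}(i), e_k(i))]| < 1/n(i) < \epsilon$. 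A final application of \L os to the formula $\psi_k(y, \mathbf{e}_k)$ identifies this indicator with $\delta_p(\varphi_k(x, \mathbf{b}))$ on a $D$-set $H$, so the $\epsilon$-approximation holds on $X_N \cap H \in D$, which is exactly what the ultralimit condition demands.
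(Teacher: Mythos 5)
Your proposal is correct and follows essentially the same route as the paper: enumerate the partitioned formulas, use definability of $\delta_p$ to express the uniform $1/n$-approximation as a first-order statement, transfer it by \strokeL o\'s's theorem, and diagonalize along a decreasing chain of $D$-sets with empty intersection --- the only cosmetic difference being that you keep the approximating tuple $\bar{a}_n$ as parameters and use the coordinates of its representatives, where the paper existentially quantifies the tuple and re-chooses witnesses inside each $M_i$. One small indexing repair: $\Phi_{m,k}$ is only guaranteed to hold in $\mathcal{M}$ (hence $G_{m,k}$ only guaranteed to lie in $D$) when $k \le m$, since $\bar{a}_m$ only controls $\varphi_1,\dots,\varphi_m$; so the refinement should read $X_n := Y_n \cap \bigcap_{k \le m \le n} G_{m,k}$, which is all your verification actually uses.
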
 

\begin{proof} Let $(\varphi_i(x,y_i))_{i \in \mathbb{N}}$ be an enumeration of partitioned $\mathcal{L}$-formulas. Since $\delta_{p}$ is finitely approximated,  $\delta_{p}$ is definable (by Proposition \ref{big:fact}). Hence for each partitioned $\mathcal{L}$-formula $\varphi_{i}(x,y_{i})$, there exists an $\mathcal{L}(\mathcal{M})$-formula $d_{p}^{\varphi_{i}}(y_{i},\mathbf{c}_{i})$ such that for any $\mathbf{b} \in \mathcal{M}^{|y_{i}|}$, $\mathcal{M} \models d_{p}^{\varphi_{i}}(\mathbf{b},\mathbf{c}_{i})$ if and only if $\varphi_{i}(x,\mathbf{b}) \in p$. Since $\delta_p$ is finitely approximated, it follows from Proposition \ref{fam:uniform} that for every $n \in \mathbb{N}$, there exists some $\overline{\mathbf{a}} \in (\mathcal{M})^{< \omega}$ such that for every $i \leq n$, 
 
\begin{equation*} \sup_{\mathbf{b} \in \mathcal{M}^{|y_{i}|}}|\delta_{p}(\varphi_{i}(x,\mathbf{b})) - \Av(\overline{\mathbf{a}})(\varphi_{i}(x,\mathbf{b}))| < \frac{1}{n}. 
\end{equation*}
Therefore for each $n \in \mathbb{N}$ there exists some number $r(n)$ such that
\begin{align*}  \mathcal{M} \models \exists x_1,...,x_{r(n)} \\  \bigwedge_{i \leq n} \Big(\forall y_{i} \Big( & d_{p}^{\varphi_{i}}(y_{i},\mathbf{c}_{i})  \leftrightarrow \bigvee_{A \in \Gamma(n)}  \Big( \bigwedge_{s \in A} \varphi_{i}(x_s,y_{i}) \wedge \bigwedge_{t \not \in A}  \neg \varphi_{i}(x_t,y_i)\Big)\Big)\Big),
\end{align*} 
where $\Gamma(n) = \{A \subset \{1,...,r(n)\}: \frac{|A|}{r(n)} > 1 - \frac{1}{n}\}$.  For each $n$, we let $\psi_{n} = \exists \overline{x} \sigma_{n}(\overline{x})$ be the sentence described above.
This sentence states that there exists $r(n)$ elements in $\mathcal{M}$ whose average uniformly approximates the measure $\delta_{p}$ on the formulas $\varphi_{1}(x,y_1),...,\varphi_{n}(x,y_n)$ up to an error of $\frac{1}{n}$. 
For each $i \in \mathbb{N}$ choose a sequence $(c_{i_j})_{j \in \mathbb{N}}$ such that  $[(c_{i_j})]_{D} = \mathbf{c}_{i}$. By \strokeL o\'s's theorem, it follow that
\begin{align*} E_n = \Big\{ k \in &  \mathbb{N} :   M_{k} \models \exists x_1,...,x_{r(n)}  \\  \bigwedge_{i \leq n}  &  \Big(\forall y_{i} \Big( d_{p}^{\varphi_{i}}(y_{i},c_{i_k}) \leftrightarrow \bigvee_{A \in \Gamma(n)} \Big( \bigwedge_{s \in A}  \varphi_{i}(x_s,y_{i}) \wedge \bigwedge_{t \not \in A} \neg \varphi_{i}(x_t,y_i)\Big)\Big)\Big)\Big\} \in D.
\end{align*} 
Let $X_n = \{i \in \mathbb{N}: i \geq n\}$ and let $Y_n = X_n \cap E_n$. Notice that $\bigcap Y_n = \emptyset$ and $Y_n \in D$ for each $n \in \mathbb{N}$. We now construct a sequence of Keisler measures $(\mu_{j})_{j \in \mathbb{N}}$ such that each $\mu_j$ is in $\conv_{x}(M_j)$. 
We then argue that $\lim_{D} \mu_{j} = \delta_{p}$. Consider the function $g: \mathbb{N} \to \mathbb{N}$ where 
\begin{equation*}
g(j)=\begin{cases}
\begin{array}{cc}
\max\{n\in\mathbb{N}:j\in Y_{n}\} & \text{if exists,}\\
0 & \text{otherwise.}
\end{array}\end{cases}
\end{equation*} 
If $g(j) = 0$, set $\mu_j$ equal to any measure in $\conv_{x}(M_j)$. If $g(j) = m > 0$, then 
\begin{align*} M_{j} \models \exists x_1,...,x_{r(m)} \\  \bigwedge_{i \leq m}   \Big(\forall y_{i}  & \Big( d_{p}^{\varphi_{i}}(y_{i},c_{i_j}) \leftrightarrow \bigvee_{A \in \Gamma(m)} \Big( \bigwedge_{s \in A} \varphi_{i}(x_s,y_{i}) \wedge \bigwedge_{t \not \in A} \neg \varphi_{i}(x_{t},y_i) \Big)\Big). 
\end{align*} 
Choose $\overline{a}_{j} = (a_{1}^{j},...,a_{r(n)}^{j})$ in $M_j^{r(n)}$ which satisfy the formula above (after removing the existential quantifiers $\exists x_1,...,x_{r(m)}$). Let $\mu_{j} = \Av(\overline{a}_j)$. 

We now demonstrate that the ultralimit of $(\mu_{j})_{j \in \mathbb{N}}$ is equal to $\delta_{p}$. Fix $\theta(x,\mathbf{e}) \in \mathcal{L}_{x}(\mathcal{M})$. Without loss of generality, assume that $\delta_{p}(\theta(x,\mathbf{e})) = 1$. Fix $\epsilon > 0$ and let $(e_i)_{i \in \mathbb{N}}$ be a sequence such that $[(e_i)]_{D} = \mathbf{e}$. By Fact \ref{computation}, it suffices to show that $K = \{j \in \mathbb{N}: \mu_j(\theta(x,e_j)) \geq 1 - \epsilon \} \in D$. Notice that $\theta(x,y)$ appears in our index, say $\theta(x,y) = \varphi_{m}(x,y_{m})$. Since $\delta_{p}(\theta(x,\mathbf{e})) = 1$ we have that $\mathcal{M} \models d_{p}^{\varphi_{m}}(\mathbf{e},\mathbf{c}_m)$ and so the set $Z=\{j\in\mathbb{N}:M_{j}\models d_{p}^{\varphi_{m}}(e_{j},c_{m_j})\}$ is in $D$. 
Let $m_{*} = \max\{m, \lc \frac{1}{\epsilon} +1 \rc\}$. By construction $Y_{m_*} \cap Z \in D$. We argue that $Y_{m_*} \cap Z \subseteq K$ and so $K$ is in $D$ (which will complete our proof).

 Let $j \in Y_{m_*} \cap Z$. Since $j \in Y_{m_{*}}$, it follows that 
 
 \begin{equation*} M_j \models d_{p}^{\varphi_m}(e_j,c_{m_j}) \leftrightarrow  \bigvee_{A \in \Gamma(m_*)}\Big(\bigwedge_{s \in A}  \varphi_{m}(a^{j}_{s},e_j) \wedge \bigwedge_{t \not \in A} \neg \varphi_{m}(a_{t}^{j},e_j)\Big).
 \end{equation*} 
Since $j \in Z$, it follows that $M_{j} \models d_{p}^{\varphi_m}(e_j, c_{m_j})$. So, $M_j \models \bigwedge_{s \in A_{*}}  \varphi_{m}(a^{j}_{s},e_j)$ for some $A_{*} \in \Gamma(m_*)$. Hence 
\begin{equation*} \mu_{j}(\theta(x,e_j)) = \frac{|\{s \leq r(m_{*}) :M_j \models \varphi_{m}(a_{s}^{j},e_j)\}|}{r(m_{*})}  \geq \frac{|A_*|}{r(m_*)} > 1 - \frac{1}{m_{*}} > 1 - \epsilon. 
\end{equation*} 
Thus $Y_{m_*} \cap Z \subset K$ and so $K \in D$. 
\end{proof} 

\begin{remark} In the proof above, we used $I = \mathbb{N}$. However, we only need our ultrafilter to be countably incomplete for the result to go through. We will see this in the general version of the proof.
\end{remark} 

The following fact is stated without proof for global definable measures in \cite[Fact 2.3]{ChernGan}. We provide a short proof for completeness. 

\begin{proposition}\label{prop:def} Let $\mu \in \mathfrak{M}_{x}(M)$ and suppose that $\mu$ is $M$-definable. Then for every natural number $n$ (say greater than 4) and formula $\varphi(x,y) \in \mathcal{L}$, there exists definable sets $d^{\varphi}_{0,n}(y),...,d^{\varphi}_{n,n}(y)$ in $\mathcal{L}_{y}(M)$ such that 
\begin{enumerate}
\item The collection $\{d^{\varphi}_{i,n}(y)\}_{i=0}^{n}$ cover $M^{y}$, i.e. $M \models \forall y \left( \bigvee_{i=0}^{n} d_{i,n}^{\varphi}(y) \right)$. 
\item If $M \models d^{\varphi}_{i,n}(b)$, then $|\mu(\varphi(x,b)) - \frac{i}{n}| < \frac{1}{n}$. 
\item If $|\mu(\varphi(x,b)) - \frac{k}{n}| < \frac{1}{n}$, then $M \models \bigvee_{j=k-1}^{k+1} d_{j,n}^{\varphi}(b)$. 
\end{enumerate} 
\end{proposition}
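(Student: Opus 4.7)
The plan is to use continuity of the definability map together with the Stone-space structure of $S_{y}(M)$. By $M$-definability of $\mu$, we have a continuous function $F := F^{\varphi}_{\mu,M} \colon S_{y}(M) \to [0,1]$ such that $F(\tp(b/M)) = \mu(\varphi(x,b))$ for every $b \in M^{y}$. For each $i \in \{0,1,\ldots,n\}$ set
\begin{equation*}
U_{i} := \left( \tfrac{i}{n} - \tfrac{1}{n},\ \tfrac{i}{n} + \tfrac{1}{n} \right) \cap [0,1],
\end{equation*}
and note that the $U_{i}$ form an open cover of $[0,1]$, so the preimages $V_{i} := F^{-1}(U_{i})$ form an open cover of $S_{y}(M)$.

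Next I would exploit the fact that $S_{y}(M)$ is a Stone space: every open subset is a union of basic clopen sets of the form $[\psi(y)]$ with $\psi(y) \in \Lc_{y}(M)$. So each $V_{i}$ is a union of such basic clopens, and collecting them across all $i$ yields an open cover of $S_{y}(M)$ by basic clopens. By compactness there is a finite subcover $[\psi_{1}(y)],\ldots,[\psi_{k}(y)]$, and for each $j$ we may pick some $f(j) \in \{0,\ldots,n\}$ with $[\psi_{j}(y)] \subseteq V_{f(j)}$. I would then define
\begin{equation*}
d^{\varphi}_{i,n}(y) := \bigvee_{j \leq k,\ f(j) = i} \psi_{j}(y),
\end{equation*}
with the convention that an empty disjunction is the formula $\bot$.

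Condition (1) is immediate since $\bigcup_{i} [d^{\varphi}_{i,n}(y)] = \bigcup_{j \leq k} [\psi_{j}(y)] = S_{y}(M)$. For condition (2), if $M \models d^{\varphi}_{i,n}(b)$ then $\tp(b/M) \in [\psi_{j}(y)] \subseteq V_{i}$ for some $j$ with $f(j) = i$, and hence $\mu(\varphi(x,b)) = F(\tp(b/M)) \in U_{i}$, giving $|\mu(\varphi(x,b)) - i/n| < 1/n$. For condition (3), suppose $|\mu(\varphi(x,b)) - k/n| < 1/n$; by condition (1) there is some $i$ with $M \models d^{\varphi}_{i,n}(b)$, and by condition (2) we have $|\mu(\varphi(x,b)) - i/n| < 1/n$. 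The triangle inequality forces $|i - k| < 2$, so $i \in \{k-1, k, k+1\}$, as required. There is no serious obstacle here; the only subtlety is the use of total disconnectedness of $S_{y}(M)$ to refine the open cover $\{V_{i}\}$ into a finite clopen cover, after which the three conditions are essentially bookkeeping via the triangle inequality.
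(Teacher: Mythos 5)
Your proposal is correct and follows essentially the same route as the paper: pull back the overlapping intervals of radius $1/n$ around the points $i/n$ under the continuous definability map $F^{\varphi}_{\mu,M}$, refine the resulting open cover of the Stone space $S_{y}(M)$ into a finite cover by basic clopens, and group the pieces by index to form the $d^{\varphi}_{i,n}$. Your explicit verification of condition (3) via the triangle inequality is a detail the paper leaves to the reader, but the construction is identical.
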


\begin{proof} Since $\mu$ is definable over $M$, the map $F_{\mu,M}^{\varphi}: S_{y}(M) \to [0,1]$ is continuous. Consider the sets $I_{n} = \{0,\frac{1}{n},\frac{2}{n},...,\frac{n-1}{n},n\}$. Let $A_0 = \left(F_{\mu}^{\varphi} \right)^{-1}([0,\frac{1}{n}))$, $A_n = \left(F_{\mu}^{\varphi} \right)^{-1}((\frac{n-1}{n},1])$, and for $0 < i < n$, let $A_{i} = \left(F_{\mu}^{\varphi} \right)^{-1}((\frac{i-1}{n},\frac{i+1}{n}))$. Since $F_{\mu}^{\varphi}$ is continuous, we have that for each $0 \leq i \leq n$, $A_i = \bigvee_{j \in J_i}[\theta_{j}^{i}(y)]$. The collection $\{[\theta_{j}^{i}(y)]: j \in J_{i}; i \in I_{n}\}$ forms an open cover of $S_{y}(M)$. So we have a finite subcover say $\{[\theta_{j}^{i}(y)]: j \in  J_{i}^{*}; i \in I_{n}\}$ where $J_{i}^{*}$ is a finite subset of $J_{i}$. For each $0 \leq i \leq n$, we define
\begin{equation*} 
d_{i,n}^{\varphi}(y) =\begin{cases}
\begin{array}{cc}
\bigvee_{j \in J_{i}^{*}} \theta_{j}^{0}(y) & J_{i}^{*} \neq \emptyset,\\
y \neq y  & \text{otherwise}.
\end{array}\end{cases}
\end{equation*} 
We claim that $\{d_{i,n}^{\varphi}(y)\}_{i=0}^{n}$ as constructed satisfy the theorem. 
\end{proof}  

We also need the following fact.

\begin{fact}\label{fact:elementary} Suppose $m \in \mathbb{N}_{>0}$ and $r,q \in \mathbb{R}$. If $|q - r/m| < 1/m$ then 
\begin{equation*} 
 \left|q - \frac{\floor{r}}{m} \right| < \frac{1}{m} \text{ or } \left|q - \frac{\lc r \rc}{m} \right| < \frac{1}{m}. 
\end{equation*} 
\end{fact}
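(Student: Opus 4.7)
The plan is to reduce to cases based on whether $q$ lies to the left or right of $r/m$, and use that $\lfloor r \rfloor/m$ and $\lceil r \rceil/m$ differ by at most $1/m$.

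First I would write $r = \lfloor r \rfloor + \theta$ with $\theta \in [0,1)$ and dispose of the trivial case where $r$ is an integer, since then $\lfloor r \rfloor/m = \lceil r \rceil/m = r/m$ and the hypothesis directly yields either conclusion. So assume $\theta \in (0,1)$, which forces $\lceil r \rceil = \lfloor r \rfloor + 1$.

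Next, I would translate the hypothesis $|q - r/m| < 1/m$ into the statement that $q$ lies in the open interval
\begin{equation*}
J := \left( \tfrac{\lfloor r \rfloor}{m} + \tfrac{\theta - 1}{m},\ \tfrac{\lfloor r \rfloor}{m} + \tfrac{\theta + 1}{m} \right),
\end{equation*}
and observe that $J$ is contained in the union
\begin{equation*}
\left(\tfrac{\lfloor r \rfloor}{m} - \tfrac{1}{m},\ \tfrac{\lfloor r \rfloor}{m} + \tfrac{1}{m}\right) \ \cup\ \left(\tfrac{\lceil r \rceil}{m} - \tfrac{1}{m},\ \tfrac{\lceil r \rceil}{m} + \tfrac{1}{m}\right),
\end{equation*}
since the second interval is $\bigl(\lfloor r \rfloor/m,\ \lfloor r \rfloor/m + 2/m\bigr)$ and together these two intervals cover all of $\bigl(\lfloor r \rfloor/m - 1/m,\ \lfloor r \rfloor/m + 2/m\bigr) \supseteq J$. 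Membership in either of the two intervals is exactly the desired conclusion.

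I do not anticipate any real obstacle here; the only mild subtlety is checking that the point $\lfloor r \rfloor/m + 1/m = \lceil r \rceil/m$, which fails to lie in the first interval because it is a strict boundary, still lies in the second one, so no point of $J$ is lost at the seam between the two intervals. A case split "$q < \lceil r \rceil/m$ versus $q \geq \lceil r \rceil/m$" makes this bookkeeping transparent and closes the argument.
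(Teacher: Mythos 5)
Your argument is correct: after disposing of the integer case, the hypothesis places $q$ in the interval $\left(\frac{\lfloor r \rfloor}{m} + \frac{\theta-1}{m}, \frac{\lfloor r \rfloor}{m} + \frac{\theta+1}{m}\right)$ with $\theta \in (0,1)$, and this is indeed contained in $\left(\frac{\lfloor r \rfloor}{m} - \frac{1}{m}, \frac{\lfloor r \rfloor}{m} + \frac{2}{m}\right)$, which is exactly the union of the two target intervals (they overlap on a nonempty open set, so there is no gap at the seam). The paper states this as a Fact with no proof at all, so there is nothing to compare against; your write-up simply supplies the omitted elementary verification, and it does so correctly.
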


\begin{theorem}\label{Theorem:pseudo} Let $\mathcal{L}$ be a countable language, $I = \kappa$, $D$ be a countably incomplete ultrafilter on $I$, and $(M_i)_{i \in I}$ be an indexed family of $\mathcal{L}$-structures. Let $\mathcal{M} = \prod_{D}M_i$ and $\mu \in \mathfrak{M}_{x}(\mathcal{M})$. If $\mu$ is finitely approximated then $\mu$ is pseudo-finite. 
\end{theorem}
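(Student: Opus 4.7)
The plan is to follow the blueprint of Theorem \ref{types}, with two modifications. First, in place of the definability scheme $d_p^{\varphi_i}$ (which only needs to pin down $\mu(\varphi_i(x,\mathbf b)) \in \{0,1\}$), I will use the partitioning formulas $d^{\varphi_i}_{k,n}(y)$ supplied by Proposition \ref{prop:def} (which exist because finite approximability implies definability by Fact \ref{big:fact}). Second, in place of the sets $X_n = \{i \ge n\}$, I will use a decreasing chain $I = A_0 \supseteq A_1 \supseteq \cdots$ with $A_n \in D$ and $\bigcap_n A_n = \emptyset$; such a chain exists precisely because $D$ is countably incomplete. These are the only two places where the index set $I = \mathbb{N}$ was really used in the previous argument.

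More concretely, fix an enumeration $(\varphi_n(x,y_n))_{n \in \mathbb{N}}$ of the partitioned $\mathcal{L}$-formulas and, for each $n \ge 4$ and each $i \le n$, let $d^{\varphi_i}_{0,n}(y_i,\mathbf c_{i,n}), \ldots, d^{\varphi_i}_{n,n}(y_i,\mathbf c_{i,n})$ be the partition formulas from Proposition \ref{prop:def} (with the parameters from $\mathcal{M}$ displayed). Combining finite approximation with Proposition \ref{fam:uniform}, there exists some $r(n) \in \mathbb{N}$ such that $\mathcal{M}$ satisfies the existential sentence $\sigma_n$ asserting
\begin{equation*}
\exists x_1,\ldots,x_{r(n)} \bigwedge_{i \le n} \forall y_i \bigwedge_{k=0}^{n}\!\left( d^{\varphi_i}_{k,n}(y_i,\mathbf c_{i,n}) \to \bigvee_{A \in \Gamma(n,k)}\!\left(\bigwedge_{s \in A}\varphi_i(x_s,y_i) \wedge \bigwedge_{t \notin A}\neg \varphi_i(x_t,y_i)\right)\right),
\end{equation*}
where $\Gamma(n,k) = \{A \subseteq \{1,\ldots,r(n)\} : |\,|A|/r(n) - k/n\,| < 1/n\}$. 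This sentence expresses, in first order logic, the statement that some tuple of length $r(n)$ gives an average measure approximating $\mu$ to within $2/n$ uniformly on $\varphi_1,\ldots,\varphi_n$. Choosing representatives $\mathbf c_{i,n}(j) \in M_j$ for each parameter and applying \strokeL o\'s, the set $E_n \subseteq I$ of indices $j$ for which the analogous sentence holds in $M_j$ lies in $D$.

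Now set $Y_n = E_n \cap A_n$ (so $Y_n \in D$ and $\bigcap_n Y_n = \emptyset$) and define $g\colon I \to \mathbb{N}$ by $g(j) = \max\{n : j \in Y_n\}$ when this maximum exists and $g(j) = 0$ otherwise; the empty intersection guarantees $g$ is well-defined. For each $j$ with $g(j) = m \ge 1$, pick a witnessing tuple $\overline a_j \in M_j^{r(m)}$ for the sentence satisfied by $M_j$, and set $\mu_j = \Av(\overline a_j) \in \conv_x(M_j)$; otherwise pick $\mu_j$ arbitrarily. To verify $\lim_D \mu_j = \mu$, fix $\theta(x,\mathbf e) \in \mathcal{L}_x(\mathcal{M})$, say $\theta = \varphi_m$, and fix $\epsilon > 0$. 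Choose $n \ge m$ with $3/n < \epsilon$, and using Fact \ref{fact:elementary} pick $k$ with $|\mu(\theta(x,\mathbf e)) - k/n| < 1/n$; then by Proposition \ref{prop:def}(3), $\mathcal{M} \models d^{\varphi_m}_{k',n}(\mathbf e,\mathbf c_{m,n})$ for some $k' \in \{k-1,k,k+1\}$. By \strokeL o\'s this holds in a $D$-large set of coordinates, and intersecting with $Y_n$ (still in $D$) yields that $\mu_j(\theta(x,e_j))$ lies within $1/n$ of $k'/n$, hence within $3/n < \epsilon$ of $\mu(\theta(x,\mathbf e))$. Invoking Fact \ref{computation} finishes the proof.

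The one genuine point of care, and where I expect the bookkeeping to be delicate, is setting up $\sigma_n$ so that the partition formulas interact cleanly with the combinatorial counts in $\Gamma(n,k)$; once this is right, the rest is a direct transcription of the type case, with the decreasing chain $(A_n)$ replacing $(X_n)$.
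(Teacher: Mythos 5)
Your proposal follows the paper's proof essentially step for step: the same enumeration of partitioned formulas, the same use of Proposition \ref{prop:def} and Proposition \ref{fam:uniform} to build first-order sentences $\sigma_n$ witnessed in $\mathcal{M}$, the same transfer to a $D$-large set $E_n$, the same use of countable incompleteness to form $Y_n$ and the function $g$, and the same verification via a single partition formula $d^{\varphi_m}_{k',n}$ at a fixed level intersected with $Y_n$. Two of your deviations are actually mild improvements: your $\sigma_n$ drops the converse implication (average value $\to$ appropriate $d^{\varphi_i}_{j,n}$) that the paper's sentence $\psi_n$ carries but never uses in the verification, and your insistence on a \emph{decreasing} chain $(A_n)$ with empty intersection is exactly what is needed for $\max\{n : j \in Y_n\}$ to exist.

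One constant needs repair. With $\Gamma(n,k) = \{A \subseteq \{1,\dots,r(n)\} : \bigl|\,|A|/r(n) - k/n\,\bigr| < 1/n\}$, the sentence $\sigma_n$ need not be true in $\mathcal{M}$: if $\mathcal{M} \models d^{\varphi_i}_{k,n}(\mathbf{b},\mathbf{c}_{i,n})$, Proposition \ref{prop:def} only gives $|\mu(\varphi_i(x,\mathbf{b})) - k/n| < 1/n$, and the finite-approximation inequality contributes another $1/n$, so the set $A$ of indices $s$ with $\models \varphi_i(\mathbf{a}_s,\mathbf{b})$ satisfies $\bigl|\,|A|/r(n) - k/n\,\bigr| < 2/n$ but possibly not $< 1/n$; your own gloss (``approximating to within $2/n$'') is the correct bound but is inconsistent with the displayed definition. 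Take the tolerance in $\Gamma(n,k)$ to be $2/n$, as the paper does, and adjust the final estimate accordingly: on $Y_n \cap Z$ you get $|\mu_j(\theta(x,e_j)) - k'/n| < 2/n$ and $|k'/n - \mu(\theta(x,\mathbf{e}))| < 2/n$, so you should choose $n$ with $4/n < \epsilon$ rather than $3/n < \epsilon$. With that change everything goes through as you describe.
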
 

\begin{proof} The proof of this theorem is structurally similar to the proof of Proposition \ref{types}, but slightly more complicated since it is more difficult to encode the measure of a formula into the language. Let $(\varphi_{i}(x,y_i))_{i \in \mathbb{N}}$ be an enumeration of partitioned $\mathcal{L}$-formulas. Since $\mu$ is finitely approximated, $\mu$ is definable (Proposition \ref{big:fact}). By Propositions \ref{prop:def}, for each partitioned $\mathcal{L}$-formula $\varphi_{i}(x,y_{i})$ and $n \in \mathbb{N}$, there exists $\mathcal{L}_{y_i}(\mathcal{M})$-formulas (not necessarily a partition) $d_{0,n}^{\varphi_{i}}(y_{i},\mathbf{c}_{0,n}^{i}),...,d_{n,n}^{\varphi_{i}}(y_{i},\mathbf{c}_{n,n}^{i})$ such that 
\begin{enumerate}
\item  $\{d_{l,n}^{\varphi_i}(y_{i},\mathbf{c}_{l,n}^{i})\}_{l=0}^{n}$ covers $\mathcal{M}^{|y_i|}$,
\item  if $\mathcal{M}\models d_{j,n}^{\varphi_{i}}(\mathbf{b},\mathbf{c}_{j,n}^{i})$, then $|\mu(\varphi_{i}(x,\mathbf{b}))-\frac{j}{n}|<\frac{1}{n}$,
\item if $|\mu(\varphi(x,b)) - \frac{k}{n}| < \frac{1}{n}$, then $\mathcal{M} \models \bigvee_{j=k-1}^{k+1} d_{j,n}^{\varphi_i}(\mathbf{b},\mathbf{c}_{j,n}^{i})$,
\item and if $l < 0$ or $l > n$, let $d_{l,n}^{\varphi_i}(y_i,\mathbf{c}_{l,n}^{i}) := y_i \neq y_i$. 
\end{enumerate} 
For simplicity, we will suppress the parameters, the $\mathbf{c}$'s, throughout the rest of the proof. Since $\mu$ is finitely approximated, it follows from Proposition \ref{fam:uniform} that for every $n \in \mathbb{N}$, there exists some $\overline{\mathbf{a}} \in (\mathcal{M})^{< \omega}$ such that for every $i \leq n$, 
\begin{equation*}  \sup_{\mathbf{b} \in \mathcal{M}^{|y_i|}} | \mu(\varphi_{i}(x,\mathbf{b})) - \Av(\overline{\mathbf{a}})(\varphi_{i}(x,\mathbf{b}))| < \frac{1}{n}. \tag{$\dagger$}
\end{equation*} 
We now construct a sentence similar to the one found in Proposition \ref{types}. We claim that for every $n \in \mathbb{N}$, there exists a number $r(n)$ such that the following sentence is true in $\mathcal{M}$. 
\begin{align*} \mathcal{M} \models &  \exists x_1,...,x_{r(n)}  \\&   \bigwedge_{i \leq n} \Biggl( \forall y_i  \biggl( \bigwedge_{j \leq n} \Big[d_{j,n}^{\varphi_i}(y_i) \to \bigvee_{A \in \Gamma(n,j)}  \Big( \bigwedge_{s \in A} \varphi_{i}(x_s,y_{i}) \wedge \bigwedge_{t \not \in A}  \neg \varphi_{i}(x_t,y_i)\Big) \Big] \\ & \wedge  \bigwedge_{A \subseteq \mathcal{P}([r(n)])} \Big[ \Big( \bigwedge_{l \in A} \varphi_i(x_l,y_i) \wedge \bigwedge_{l \not \in A} \neg \varphi_i(x_l,y_i) \Big) \to \bigvee_{t=-1}^{2}d^{\varphi_i}_{\gamma(A) + t,n}(y_i) \Big] \biggl) \Biggl), 
\end{align*} 
where $\Gamma(n,j) = \{ A \subseteq r(n): |\frac{|A|}{r(n)} - \frac{j}{n}| < \frac{2}{n}\}$ and $\gamma(A) = \floor{\frac{|A|}{r(n)} \cdot n} \cdot \frac{1}{n}$. This sentence essentially states that given some $n \in \mathbb{N}$, there exists points $\mathbf{a}_1,...,\mathbf{a}_{r(n)} \in \mathcal{M}$ such that if $i \leq n$ and $\models d_{j,n}^{\varphi_{i}}(\mathbf{b})$, then $\Av(\overline{\mathbf{a}})(\varphi_i(x,\mathbf{b})) \in (\frac{j-1}{n}, \frac{j+1}{n})$ and if $\Av(\overline{\mathbf{a}})(\varphi_i(x,b)) = r$, then the correct definition formula holds (modulo some wiggle room). 

We now justify why $\mathcal{M} \models \psi_n$. Let $\mathbf{a}_{1},...,\mathbf{a}_{r(n)}$ be points in $\mathcal{M}$ which satisfy equation $(\dagger)$. Now assume that $\mathcal{M} \models d_{j,n}^{\varphi_{i}}(\mathbf{b})$ for some $i \leq n$. Then $|\mu(\varphi_{i}(x,\mathbf{b})) - \frac{j}{n}|< \frac{1}{n}$. Since $\mathbf{a}_{1},...,\mathbf{a}_{r(n)}$ satisfy equation $(\dagger)$, we have that $|\mu(\varphi_{i}(x,\mathbf{b})) - \Av(\overline{\mathbf{a}})(\varphi_{i}(x,\mathbf{b}))| < \frac{1}{n}$ and hence $|\Av(\mathbf{a})(\varphi_i(x,\mathbf{b})) - \frac{j}{n}| < \frac{2}{n}$ and so 
\begin{equation*}
    \Big|\frac{|\{i \leq r(n): \models \varphi_i(\mathbf{a}_i,\mathbf{b})\}|}{r(n)} - \frac{j}{n}\Big| < \frac{2}{n}.
\end{equation*}
Now suppose that 
\begin{equation*}\bigwedge_{A \subseteq \mathcal{P}([r(n)])} \bigwedge_{l \in A} \varphi_{i}(\mathbf{a}_{l}, \mathbf{b}) \wedge \bigwedge_{t \not \in A} \neg \varphi_{i}(\mathbf{a}_{t},\mathbf{b}).
\end{equation*}
Then $\Av(\overline{\mathbf{a}})(\varphi(x,\mathbf{b})) = \frac{|A|}{r(n)}$. By equation $(\dagger)$, notice $|\mu(\varphi_{i}(x,\mathbf{b})) - \frac{|A|}{r(n)}| < \frac{1}{n}$, and so $| \mu(\varphi_{i}(x,\mathbf{b}))  - \frac{\frac{|A|}{r(n)} \cdot n}{n}| < \frac{1}{n}$. Hence by Fact \ref{fact:elementary},  

\begin{equation*} 
| \mu(\varphi_{i}(x,\mathbf{b}))  - \frac{ \floor{ \frac{|A|}{r(n)} \cdot n }}{n}| < \frac{1}{n} \text{ or } | \mu(\varphi_{i}(x,\mathbf{b}))  - \frac{ \lc \frac{A}{r(n)} \cdot n \rc }{n}| < \frac{1}{n}. 
\end{equation*} 
By construction, this implies that
\begin{equation*} 
\mathcal{M} \models \bigvee_{j= -1}^{1}d_{\gamma(A) + j,n}^{\varphi_i}(\mathbf{b}) \vee \bigvee_{j= -1}^{1}d_{\gamma(A) +1 + j,n}^{\varphi_i}(\mathbf{b}). 
\end{equation*}  
which is precisely what we want. 

As in the proof of Proposition \ref{types}, we construct the collection of sets $E_n$ where $E_n = \{k \in \mathbb{N}: M_{k} \models \psi_{n}\}$ with the parameters in $\psi_{n}$ replaced with the appropriate parameters in each $M_{k}$. By \strokeL o\'{s}'s theorem, $E_n \in D$.  Since $D$ is countably incomplete, we let $\{X_i : i \in \mathbb{N}\} \subseteq D$ such that $\bigcap_{i \in \mathbb{N}} X_i = \emptyset$. Again, we set $Y_n = E_n \cap X_n$ and we notice that $\bigcap Y_n = \emptyset$. 
In a similar way as to Proposition \ref{types}, we now construct a sequence of Keisler measure $(\mu_{i})_{i \in I}$ where each $\mu_{i}$ is in $\mathfrak{M}_{x}(M_i)$. We define the map $g: I \to \mathbb{N}$ where $g(i) = \max\{ n \in \mathbb{N}: i \in Y_n\}$ and $0$ otherwise. If $g(i) = 0$, we let $\mu_{i}$ be any measure in $\conv(M_i)$. If $g(i) = n > 0$, then we choose $\overline{a}_{i} = (a_1^{i},...,a_{r(n)}^{i})$ in $M_i$ such that $M_i \models \sigma_{n}(\overline{a}_{i})$ and let $\mu_{i} = \Av(\overline{a}_i)$. 

We claim that the ultralimit of $(\mu)_{i \in I}$ is equal to $\mu$. Let $\theta(x,y)$ be an $\mathcal{L}$-formula and assume that $\mu(\theta(x,\mathbf{e})) = s$. Fix $\epsilon > 0$ and let $[(e_i)]_D = \mathbf{e}$. It suffices to show that $K = \{i \in I: |\mu_{i}(\theta(x,e_{i})) - s| < \epsilon \} \in D$. Notice that $\theta(x,y)$ appears in our index, say $\theta(x,y) = \varphi_{m}(x,y_m)$. Let $m_{*} = \max\{ m +1, \lc \frac{10}{\epsilon} \rc \}$.
 Since $\mu(\theta(x,\mathbf{e})) = s$, notice that

 \begin{equation*} 
0 = |\mu(\varphi_{m}(x,\mathbf{e})) - s| = |\mu(\varphi_{m}(x,\mathbf{e})) - \frac{s \cdot m_*}{m_*}| < \frac{1}{m_*}
 \end{equation*} 
 and so by Fact \ref{fact:elementary}, 
 
 \begin{equation*} 
 |\mu(\varphi_{m}(x,\mathbf{e})) - \frac{\floor{s \cdot m_*}}{ m_*}| < \frac{1}{m_*} \text{ or }  |\mu(\varphi_{m}(x,\mathbf{e})) - \frac{\lc s \cdot m_* \rc}{ m_*}| < \frac{1}{m_*}. 
 \end{equation*} 
 
Let $j_* = \floor{s \cdot m_*}$. Then by construction $\mathcal{M} \models \bigvee_{\alpha = -1}^{2} d^{\varphi_{m}}_{j_* +\alpha,m_*}(\mathbf{e})$ and so for some $i_* \in \{j_* -1, j_*, j_* +1, j_* +2\}$,  $\mathcal{M} \models d_{i_*,m_*}^{\varphi_m}(\mathbf{e})$. Let $Z= \{i \in I: M_i \models d_{i_*,m_{*}}^{\varphi_{m}}(e_i)\}$. By \strokeL o\'s's theorem, $Z \in D$. We now argue that $Y_{m_*} \cap Z \subset K$. This which implies $K \in D$ and completes the proof. 

Let $k \in Z \cap Y_{m_*}$. Since $k \in Y_{m_{*}}$, it follows that  
\begin{equation*}
    M_{k} \models d_{i_{*},m_{*}}^{m}(e_{k}) \to \bigvee_{A \in \Gamma(m_{*},i_{*})} \Big( \bigwedge_{l \in A} \varphi_{m}(a_{l}^{k},e_{k}) \wedge \bigwedge_{t \not \in A} \neg \varphi_{m}(a_{t}^{k},e_{k}) \Big). 
\end{equation*}
Since $k \in Z$, it follows that for some $A_{*} \in \Gamma(m_{*},i_{*})$ 
\begin{equation*}
    M_{k} \models \bigwedge_{l \in A_{*}} \varphi_{m}(a_{l}^{k},e_{k}) \wedge \bigwedge_{t \not \in A_*} \neg \varphi_{m}(a_{t}^{k},e_{k}).
\end{equation*}
Now since $A_{*} \in \Gamma(m_{*},i_{*})$, it follows that $|\frac{|A_{*}|}{r(m_*)} - \frac{i_{*}}{m_{*}}| < \frac{2}{m_*}$. By definition, this implies that 
\begin{equation*}
    |\mu_{k}(\varphi_{m}(x,e_{k})) - \frac{i_{*}}{m_{*}}| < \frac{2}{m_{*}}. 
\end{equation*}
Now notice
\begin{equation*}
    |\mu_{k}(\varphi_{m}(x,e_k)) - s| \leq  |\mu_{k}(\varphi_{m}(x,e_k)) - \frac{i_*}{m_*}| + |\frac{i_*}{m_*} - s| < \frac{2}{m*} + \frac{4}{m_*} < \frac{10}{m_*} < \epsilon. 
\end{equation*}
Hence $k \in K$ and we conclude $K \in D$. 
\end{proof}

The following corollary follows tautologically from Theorems \ref{fact:ultra} and \ref{Theorem:pseudo}. 

\begin{corollary} Suppose that $\mathcal{L}$ is countable, $I = \kappa$, $(M_i)_{i \in I}$ is an indexed family of $\mathcal{L}$-structures, $D$ is a countably incomplete ultrafilter on $I$, $\mathcal{M} = \prod_{D} M_i$ is NIP, and $\mu \in \mathfrak{M}_{x}(\mathcal{M})$. Then $\mu$ is generically stable if and only if $\mu$ is pseudo-finite. 
\end{corollary}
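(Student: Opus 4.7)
The plan is to reduce the corollary to the chain of implications
\emph{pseudo-finite} $\Rightarrow$ \emph{finitely approximated} $\Rightarrow$ \emph{generically stable} $\Rightarrow$ \emph{finitely approximated} $\Rightarrow$ \emph{pseudo-finite},
where the first arrow is Theorem \ref{fact:ultra} (which uses the NIP hypothesis on $\mathcal{M}$) and the last arrow is Theorem \ref{Theorem:pseudo} (which uses the countable language and countably incomplete ultrafilter hypotheses). The middle two arrows are where I expect any real work to live: I need to invoke the standard equivalence, in the NIP setting, between generic stability of a Keisler measure and finite approximability.

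Concretely, I would first state up front that in the ``generically stable'' direction my working definition is the one for which the equivalence with finite approximability is most transparent (e.g., finitely approximated over $\mathcal{M}$ itself, or equivalently definable and finitely satisfiable in $\mathcal{M}$). Then the forward implication is immediate: if $\mu$ is pseudo-finite, Theorem \ref{fact:ultra} hands us finite approximability, and finite approximability is generic stability under NIP. For the reverse, if $\mu$ is generically stable, the NIP characterization gives finite approximability, and Theorem \ref{Theorem:pseudo} then promotes this to pseudo-finiteness.

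The main (and only) obstacle is pinning down the precise notion of generic stability being used, since throughout the paper $\mu$ lives over an ultraproduct rather than a monster model. I would handle this by either (i) adopting ``generically stable'' as a synonym for ``finitely approximated'' in this setting, in which case the corollary is literally just the two theorems glued together, or (ii) citing the standard NIP equivalence (e.g.\ from \cite{Sibook}) that any generically stable measure is finitely approximated and vice versa, and noting that the proofs of those equivalences only require $\mathcal{M}$ to be sufficiently saturated, which an ultraproduct by a countably incomplete ultrafilter of a countable language is (it is $\aleph_1$-saturated). Once that point is in place, no further computation is required and the corollary drops out tautologically as claimed.
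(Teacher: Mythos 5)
Your proposal is correct and matches the paper's own treatment: the paper simply states that the corollary ``follows tautologically'' from Theorems \ref{fact:ultra} and \ref{Theorem:pseudo}, implicitly identifying generic stability with finite approximability in the NIP setting exactly as you do. Your extra care about which definition of generic stability is in play over an ultraproduct (rather than a monster model) is a reasonable clarification of a point the paper leaves unstated, but it does not change the argument.
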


We now discuss a brief theoretical application of the previous theorem. Let $T_n$ be the theory of the $n$-Henson graph \cite{Henson}. More explicitly, $T_n$ is the theory of the Fra\"{i}ss\'{e} limit of all finite $K_n$-free graphs. It is open whether or not this theory is pseudo-finite.

\begin{corollary} Suppose that $T_n$ is pseudo-finite. Let $(G_i)_{i \in \mathbb{N}}$ be a countable sequence of graphs and $D$ an ultrafilter on $\mathbb{N}$ such that $\mathcal{G} := \prod_{D} G_i \models T_n$. Consider the measure $\delta_{p}$ where $p$ is the unique type over  $\mathcal{G}$ such that $p \supset \{\neg R(x,\mathbf{b}): \mathbf{b} \in \mathcal{G}\}$. It was shown in \cite{CoGa} that $\delta_{p}$ is finitely approximated. Hence by the previous theorem, if $T_n$ is pseudo-finite then $\delta_p$ is itself pseudo-finite. 
\end{corollary}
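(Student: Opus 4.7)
The plan is a direct invocation of Theorem \ref{Theorem:pseudo}, so essentially all the work is verifying its hypotheses on the data at hand. First, the language of graphs $\mathcal{L} = \{R\}$ is finite and in particular countable. Second, the indexing set is $I = \mathbb{N}$, and any non-principal ultrafilter $D$ on $\mathbb{N}$ is automatically countably incomplete: the cofinite sets $X_{n} := \{k \in \mathbb{N} : k \geq n\}$ lie in $D$ for every $n$ and satisfy $\bigcap_{n \in \mathbb{N}} X_{n} = \emptyset$, which is precisely the witness of countable incompleteness used in the general-case proof of Theorem \ref{Theorem:pseudo}. Third, by the cited result of \cite{CoGa}, the Keisler measure $\delta_{p}$ is finitely approximated on $\mathcal{G}$. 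Theorem \ref{Theorem:pseudo} then immediately yields that $\delta_{p}$ is pseudo-finite with respect to $(G_{i})_{i \in \mathbb{N}}$ and $D$.

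I would also comment on what each hypothesis is doing in the statement. The standing assumption that $T_{n}$ is pseudo-finite is not actually invoked in the deduction proper; it serves only to guarantee that the set-up is non-vacuous, namely that there really exist sequences $(G_{i})_{i \in \mathbb{N}}$ of (finite) graphs whose ultraproduct models $T_{n}$, so that the pseudo-finite conclusion on $\delta_{p}$ carries substantive content. The genuine combinatorial input --- finite approximability of the anti-edge type $p$ in the $n$-Henson graph --- is entirely packaged in \cite{CoGa} and draws on the structure of $K_{n}$-free graphs in a nontrivial way. Granting that input, there is no remaining obstacle: the corollary becomes a one-line consequence of Theorem \ref{Theorem:pseudo} once countability of $\mathcal{L}$ and countable incompleteness of $D$ have been observed.
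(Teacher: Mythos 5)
Your proof is correct and follows exactly the route the paper intends: the corollary is a direct application of Theorem \ref{Theorem:pseudo} to the finitely approximated measure $\delta_p$ from \cite{CoGa}, and you rightly check the two hypotheses (countability of $\mathcal{L}=\{R\}$ and countable incompleteness of any non-principal ultrafilter on $\mathbb{N}$) that the paper leaves implicit. Your side remark that the pseudo-finiteness of $T_n$ only guarantees the setup is non-vacuous is also an accurate reading of the statement.
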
 

\section{The Morley product and the pseudo-finite product}

The main goal of this section is to show that the Morley product and the pseudo-finite product agree for reasonable pairs of measures. Explicitly, we prove that these products agree when the \textit{left-hand-side} measure is $\mathcal{M}$-definable and both are pseudo-finite. If both measures are definable and pseudo-finite, then the Morley product commutes on this pair. We then use these results to show that certain natural measures are not pseudo-finite over non-trivial ultraproducts of the Paley graphs. We begin this section by recalling the definitions of both the Morley and pseudo-finite products. 

\begin{definition}\label{def:morley} Let $\mu \in \mathfrak{M}_{x}(M)$, $\nu \in \mathfrak{M}_{y}(M)$, and suppose $\mu$ is definable. Then the \textbf{Morley product} of $\mu$ and $\nu$, denoted $\mu \otimes \nu$, is the unique measure in $\mathfrak{M}_{xy}(M)$ such that for any $\varphi(x,y,c) \in \mathcal{L}_{xy}(M)$,
\begin{equation*}
\mu \otimes \nu(\varphi(x,y,c)) = \int_{S_{y}(M)} F_{\mu,M}^{\varphi_{c}} d\tilde{\nu},
\end{equation*}
where $F_{\mu,M}^{\varphi_{c}}:S_{y}(M) \to [0,1]$ is the unique continuous map extending $\tp(b/M) \to \mu(\varphi(x,b,c))$ and $\tilde{\nu}$ is the unique regular Borel probability measure on $S_{y}(M)$ such that for any clopen set $[\psi(y)] \subseteq S_{y}(M)$, $\tilde{\nu}([\psi(y)]) = \nu(\psi(y))$. We will often drop the \textit{tilde} and write $\tilde{\nu}$ simply as $\nu$. 
\end{definition} 

We remind the reader that in general, the Morley product is non-commutative and can be a nuisance. We now recall/define the pseudo-finite product of Keisler measures. In this paper, we want our ultralimits of Keisler measures to be honest-to-goodness Keisler measures over the ultraproduct and so we do not care about internal subsets of our ultraproduct (only definable ones).

\begin{definition}\label{def:pf} Suppose that $(M_i)_{i \in I}$ is a sequence of $\mathcal{L}$-structures, $D$ is an ultrafiliter on $I$, and $\mathcal{M} := \prod_{D} M_i$. Suppose that $\mu \in \mathfrak{M}_{x}(\mathcal{M})$ and $\nu \in \mathfrak{M}_{y}(\mathcal{M})$ and both $\mu$ and $\nu$ are pseudo-finite. So there exists sequences of measures $(\mu_i)_{i \in I}$ and $(\nu_i)_{i \in I}$ such that $\mu_i \in \conv_{x}(M_i)$, $\nu_i \in \conv_{y}(M_i)$, $\lim_{D} \mu_i = \mu$, and $\lim_{D} \nu_i = \nu$. Then the \textbf{pseudo-finite product} of $\mu$ and $\nu$, denoted $\mu \boxtimes \nu$, is the unique measure in $\mathfrak{M}_{xy}(\mathcal{M})$ such that for any $\varphi(x,y,\mathbf{c}) \in \mathcal{L}_{xy}(\mathcal{M})$,
\begin{equation*}
\mu \boxtimes \nu (\varphi(x,y,\mathbf{c})) = \lim_{D} \Big(\mu_i \otimes \nu_i(\varphi(x,y,\mathbf{c}(i))\Big). 
\end{equation*}
\end{definition} 

\begin{remark}\label{product} In the definition above, since $\mu_i \in \conv_{x}(M_i)$ and $\nu_i \in \conv_{y}(M_i)$, we have that $\mu_i = \sum_{k=1}^{n} r_k \delta_{a_k}$ and $\nu_i = \sum_{j=1}^{m} s_j \delta_{b_j}$. One can check that for any $\theta(x,y) \in \mathcal{L}_{xy}(M_i)$, 
\begin{equation*} 
\mu_i \otimes \nu_i(\theta(x,y)) = \sum_{k=1}^{n} \sum_{j=1}^{m} r_ks_j \delta_{(a_k,b_j)}(\theta(x,y)) = \nu_i \otimes \mu_i (\theta(x,y)).  
\end{equation*} 
\end{remark}

One could be (and probably should be) worried about whether or not the pseudo-finite product is well-defined. We claim that it is well-defined and work towards proving this now. Towards this goal we start with a definition and prove a general lemma. 

\begin{definition}  Fix a sequence of $\mathcal{L}$-structures $(M_i)_{i \in I}$ and an ultrafilter $D$ on $I$. Let $\mathcal{M} = \prod_{D} M_i$. For each $i \in I$, let $g_i: S_{y}(M_i) \to [0,1]$ be a function. Then we define the map $g_{D}: \mathcal{M} \to [0,1]$ via $g_{D}(\mathbf{a}) = \lim_{D} (g_i(\tp(a_i/M_i)))$. 
\end{definition} 

It is easy to check that the above definition is well-defined. The following lemma is the main lemma of this section. We assume it is more or less folklore.  

\begin{lemma}\label{lemma:general} Fix a sequence of $\mathcal{L}$-structures $(M_i)_{i \in I}$ and an ultrafilter $D$ on $I$. Let $\mathcal{M} = \prod_{D} M_i$. For each $i \in I$, let $g_i,h_i : S_{y}(M_i) \to [0,1]$ be continuous functions and suppose that $\sup_{\mathbf{a} \in \mathcal{M}} |g_{D}(\mathbf{a}) - h_{D}(\mathbf{a})| < \epsilon$. If $(\nu_i)_{i \in I}$ is any sequence of Keisler measures, then 
\begin{equation*}
\lim_{D} \left(\int_{S_{y}(M_i)} g_i d\nu_i \right) \approxeq_{\epsilon} \lim_{D} \left(\int_{S_{y}(M_i)} h_i d\nu_i \right). 
\end{equation*} 
\end{lemma}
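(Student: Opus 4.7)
The plan is to leverage the basic fact that types realized by elements of $M_i$ are dense in $S_y(M_i)$ (any $\mathcal{L}(M_i)$-formula consistent with the elementary diagram of $M_i$ is realized in $M_i$ itself) in order to translate the pointwise hypothesis on $\mathcal{M}$ into a uniform sup-norm bound on $|g_i - h_i|$ valid for a $D$-large set of indices $i$. Once that uniform bound is in hand, the conclusion reduces to standard manipulations of ultralimits recorded in Fact \ref{computation}.

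Let $S := \sup_{\mathbf{a} \in \mathcal{M}} |g_D(\mathbf{a}) - h_D(\mathbf{a})|$ (so $S < \epsilon$ by hypothesis) and fix any $\epsilon' \in (S, \epsilon)$. I would first prove
\begin{equation*}
B_{\epsilon'} := \Big\{\, i \in I : \sup\nolimits_{p \in S_y(M_i)} |g_i(p) - h_i(p)| > \epsilon' \,\Big\} \notin D.
\end{equation*}
Assume for contradiction $B_{\epsilon'} \in D$. For each $i \in B_{\epsilon'}$, use continuity of $g_i - h_i$ together with density of realized types to pick $a_i \in M_i$ with $|g_i(\tp(a_i/M_i)) - h_i(\tp(a_i/M_i))| > \epsilon'$; for $i \notin B_{\epsilon'}$ fill in $a_i$ arbitrarily and set $\mathbf{a} := [(a_i)]_D \in \mathcal{M}$. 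Combining Facts \ref{computation}(1), (2), (3) then gives
\begin{equation*}
|g_D(\mathbf{a}) - h_D(\mathbf{a})| \;=\; \lim_D |g_i(\tp(a_i/M_i)) - h_i(\tp(a_i/M_i))| \;\geq\; \epsilon' \;>\; S,
\end{equation*}
contradicting the definition of $S$.

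Hence $I \setminus B_{\epsilon'} \in D$, and for every $i$ in this $D$-large set the integrals satisfy
\begin{equation*}
\Big| \int_{S_y(M_i)} g_i \, d\nu_i - \int_{S_y(M_i)} h_i \, d\nu_i \Big| \;\leq\; \int_{S_y(M_i)} |g_i - h_i| \, d\nu_i \;\leq\; \epsilon',
\end{equation*}
using that $\nu_i$ is a probability measure. Applying Fact \ref{computation}(5), together with (2) to commute $\lim_D$ with the subtraction, yields
\begin{equation*}
\lim_D \int_{S_y(M_i)} g_i \, d\nu_i \;\approxeq_{\epsilon'}\; \lim_D \int_{S_y(M_i)} h_i \, d\nu_i,
\end{equation*}
and since $\epsilon' < \epsilon$ this gives the desired $\approxeq_\epsilon$.

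The only genuinely non-bookkeeping step is the density-of-realized-types observation: without it there is an a priori gap between the hypothesis---which only constrains $g_i, h_i$ along types of the form $\tp(\mathbf{a}(i)/M_i)$ arising from actual elements of $\mathcal{M}$---and the conclusion, whose integrals depend on $g_i, h_i$ across the full Stone space $S_y(M_i)$. Density (plus continuity) is exactly what bridges the two, and once it is invoked the rest is routine ultralimit arithmetic.
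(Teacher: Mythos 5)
Your proposal is correct and follows essentially the same route as the paper: a contradiction argument using continuity of $|g_i-h_i|$ plus density of realized types in $S_y(M_i)$ to get a $D$-large set of indices on which $\sup|g_i-h_i|$ is small, then the triangle inequality for integrals against the probability measures $\nu_i$ and Fact \ref{computation}(5). Your insertion of the intermediate threshold $\epsilon'$ is a minor (and slightly cleaner) bookkeeping refinement of the paper's handling of strict versus non-strict inequalities, not a different argument.
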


\begin{proof} Let $A = \{i \in I: \sup_{q \in S_{y}(M_i)} |g_i - h_i|(q) \leq \epsilon\}$. We claim $A \in D$. If not, then $I \backslash A \in D$. For each $i \in I \backslash A$, there exists some $q_i^* \in S_{y}(M_i)$ such that $|g_i - h_i|(q_i^{*}) > \epsilon$. Since the map $|g_i - h_i|:S_{y}(M_i) \to [0,1]$ is continuous and $M_i^{y}$ is a dense subset of $S_{y}(M_i)$, there exists some $e_i \in M_i^{y}$ such that $|g_i - h_i|(\tp(e_i/M_i)) > \epsilon$.  We define $(c_i)_{i \in I}$ where 
\begin{equation*} 
c_{i}=\begin{cases}
\begin{array}{cc}
e_{i} & i\in I\backslash A,\\
\text{anything in \ensuremath{M_{i}}} & \text{otherwise}.
\end{array}\end{cases}
\end{equation*} 
and let $\mathbf{c} = [(c_i)]_{D}$. Then by Fact \ref{computation}, 
\begin{equation*} 
\epsilon \leq \lim_{D} |g_i - h_i|(\tp(c_i/M_i)) = |\lim_D g_i(\tp(c_i/M_i)) - \lim_{D}h_i(\tp(c_i/M_i))|
\end{equation*} 
\begin{equation*} 
=| g_{D}(\mathbf{c}) - h_{D}(\mathbf{c})| < \sup_{\mathbf{a} \in \mathcal{M}} | g_{D}(\mathbf{a}) - h_{D}(\mathbf{a})| < \epsilon, 
\end{equation*}
which is a contradiction. Therefore we conclude that $A \in D$. 

Now consider the set $\tilde{A} = \{i \in I: |\int_{S_{y}(M_i)} g_i d\nu_i - \int_{S_{y}(M_i)} h_i d\nu_i|  \leq  \epsilon\}$. We claim that $\tilde{A} \supseteq A$ and so $\tilde{A} \in D$. Observe that if $i \in A$, then 

\begin{align*} 
\left|\int_{S_{y}(M_i)} g_i d\nu_i -\int_{S_{y}(M_i)} h_i d\nu_i \right| & \leq \int_{S_{y}(M_i)} |g_i - h_i| d\nu_i  \\ & \leq  \int_{S_{y}(M_i)} \epsilon d\nu_i \\ & =  \epsilon. 
\end{align*}
Thus $i \in \tilde{A}$ and we conclude $\tilde{A} \in D$. Now by $(5)$ of Fact \ref{computation}, 

\begin{equation*}\lim_{D} \left(\int_{S_{y}(M_i)} g_i d\nu_i \right) \approxeq_{\epsilon} \lim_{D} \left(\int_{S_{y}(M_i)} h_i d\nu_i \right). \qedhere
\end{equation*} 
\end{proof}

\begin{lemma}\label{product:2} Fix a sequence of $\mathcal{L}$-structure $(M_i)_{i \in I}$ and an ultrafilter $D$ on $I$. Let $\mathcal{M} = \prod_{D} M_i$. Let $\mu \in \mathfrak{M}_{x}(\mathcal{M})$, $\nu \in \mathfrak{M}_{y}(\mathcal{M})$ and suppose there exists $\mu^{1}_i,\mu^{2}_i \in \mathfrak{M}_{x}(M_i)$, $\nu^{1}_i,\nu^{2}_i \in \mathfrak{M}_{y}(M_i)$ such that 
\begin{enumerate}
    \item $\lim_{D} \mu^{1}_i = \lim_{D} \mu^{2}_i = \mu$ and $\lim_D \nu^{1}_i = \lim_{D} \nu^{2}_i = \nu$.
    \item For each $i \in I$, $\mu^{1}_i,\mu^{2}_i,\nu^{1}_i,\nu^{2}_i$ are each $M_i$-definable. 
    \item For each $i \in I$, $\mu^{2}_i \otimes \nu^{1}_i  = \nu^{1}_i \otimes \mu^{2}_i$ and $\mu^{2}_i \otimes \nu^{2}_i  = \nu^{2}_i \otimes \mu^{2}_i$
\end{enumerate}
Then $\lim_{D} ( \mu^{1}_i \otimes \nu^{1}_i )= \lim_{D} (\mu^{2}_i \otimes \nu^{2}_i)$. 
\end{lemma}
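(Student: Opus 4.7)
The plan is to telescope between the two Morley products in four steps, using the commutation hypothesis (3) to pivot integration order in the middle so that each marginal can be replaced separately via Lemma \ref{lemma:general}. Fix $\varphi(x,y,\mathbf{c}) \in \mathcal{L}_{xy}(\mathcal{M})$ and set $c_i := \mathbf{c}(i)$. It suffices to prove
\begin{align*}
\lim_D \mu^1_i \otimes \nu^1_i(\varphi) &\stackrel{(a)}{=} \lim_D \mu^2_i \otimes \nu^1_i(\varphi) \stackrel{(b)}{=} \lim_D \nu^1_i \otimes \mu^2_i(\varphi) \\
&\stackrel{(c)}{=} \lim_D \nu^2_i \otimes \mu^2_i(\varphi) \stackrel{(d)}{=} \lim_D \mu^2_i \otimes \nu^2_i(\varphi),
\end{align*}
where (b) and (d) are immediate from hypothesis (3) applied pointwise in $i$.

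Steps (a) and (c) are symmetric, so I would write out (a) in detail. By Definition \ref{def:morley} and hypothesis (2), the functions $g_i := F^{\varphi_{c_i}}_{\mu^1_i,M_i}$ and $h_i := F^{\varphi_{c_i}}_{\mu^2_i,M_i}$ are continuous on $S_y(M_i)$, and $\mu^j_i \otimes \nu^1_i(\varphi) = \int_{S_y(M_i)} F^{\varphi_{c_i}}_{\mu^j_i,M_i}\, d\nu^1_i$ for $j = 1,2$. For any $\mathbf{a} \in \mathcal{M}^y$, hypothesis (1) gives
\begin{equation*}
g_D(\mathbf{a}) = \lim_D \mu^1_i(\varphi(x,\mathbf{a}(i),c_i)) = \mu(\varphi(x,\mathbf{a},\mathbf{c})) = \lim_D \mu^2_i(\varphi(x,\mathbf{a}(i),c_i)) = h_D(\mathbf{a}).
\end{equation*}
Hence $\sup_{\mathbf{a} \in \mathcal{M}^y} |g_D(\mathbf{a}) - h_D(\mathbf{a})| = 0 < \epsilon$ for every $\epsilon > 0$, and Lemma \ref{lemma:general} applied with $\nu_i := \nu^1_i$ yields (a). Step (c) is identical with the roles of $x$ and $y$ swapped: the functions $F^{\varphi_{c_i}}_{\nu^j_i,M_i}$ on $S_x(M_i)$ are continuous by (2), their $D$-limits agree pointwise on $\mathcal{M}^x$ by (1) (both equal $\nu(\varphi(\mathbf{a},y,\mathbf{c}))$), and Lemma \ref{lemma:general} applies verbatim with $\nu_i := \mu^2_i$, since nothing in its proof uses the distinguished status of $y$.

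I do not expect a serious obstacle. The entire argument is bookkeeping around Lemma \ref{lemma:general}; the only substantive role of hypothesis (3) is to pivot between Morley products so that the marginal we wish to replace is the one being integrated against (and therefore the one absorbed into the $\nu_i$ slot of Lemma \ref{lemma:general}) rather than the one produced by the continuous definability function. Hypothesis (2) is used only to guarantee continuity of the integrands so that Lemma \ref{lemma:general} is applicable, and hypothesis (1) is used only to force the pointwise agreement of the relevant ultralimit functions on $\mathcal{M}^y$ and $\mathcal{M}^x$.
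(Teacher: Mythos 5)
Your proposal is correct and follows essentially the same route as the paper: the identical four-step telescope $\mu^1_i\otimes\nu^1_i \to \mu^2_i\otimes\nu^1_i \to \nu^1_i\otimes\mu^2_i \to \nu^2_i\otimes\mu^2_i \to \mu^2_i\otimes\nu^2_i$, with Lemma \ref{lemma:general} (applied with arbitrarily small $\epsilon$, since the ultralimit functions agree pointwise by hypothesis (1)) handling the two replacement steps and hypothesis (3) handling the two pivots. The only cosmetic difference is that the paper writes the $\nu$-side definability functions as $F^{\varphi^{*}}_{\nu^{j}_i,M_i}$ using the starred formula with variables swapped, which is what you describe informally as ``the roles of $x$ and $y$ swapped.''
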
 

\begin{proof} By Fact \ref{prop:well}, it suffices to show that for any $\varphi(x,y,\mathbf{b}) \in \mathcal{L}_{xy}(\mathcal{M})$,
\begin{equation*}
 \lim_{D} \mu_i^1 \otimes \nu_i^1 (\varphi(x,y,\mathbf{b}(i))) = \lim_{D} \mu_i^2 \otimes \nu_i^2 (\varphi(x,y,\mathbf{b}(i))) 
\end{equation*}
Since $\mu_i^{1},\mu_i^{2},\nu_i^{1},$ and $\nu_i^{2}$ are each $M_i$-definable, the maps $F_{\mu_{i}^{1}}^{\varphi_{\mathbf{b}(i)}}, F_{\mu_{i}^{2}}^{\varphi_{\mathbf{b}(i)}}: S_{y}(M_i) \to [0,1]$ and $F_{\nu_{i}^{1}}^{\varphi^{*}_{\mathbf{b}(i)}}, F_{\nu_{i}^{2}}^{\varphi^{*}_{\mathbf{b}(i)}}: S_{x}(M_i) \to [0,1]$ are each continuous. Moreover, notice that for any $\mathbf{a} \in \mathcal{M}$, 
\begin{equation*} \left(F_{\mu_i^{1},M_i}^{\varphi_{\mathbf{b}(i)}}\right)_{D}(\mathbf{a}) = \lim_{D} \mu_{i}^{1}(\varphi(x,\mathbf{a}(i),\mathbf{b}(i))) = \mu(\varphi(x,\mathbf{a},\mathbf{b}))
\end{equation*} 
\begin{equation*} = \lim_{D} \mu_{i}^{2}(\varphi(x,\mathbf{a}(i),\mathbf{b}(i))) =  \left(F_{\mu_i^{2},M_i}^{\varphi_{\mathbf{b}(i)}}\right)_{D}(\mathbf{a}). 
\end{equation*} 
Likewise, $ \left(F_{\nu_i^{1},M_i}^{\varphi^{*}_{\mathbf{b}(i)}}\right)_{D} = \left(F_{\nu_i^{2},M_i}^{\varphi^{*}_{\mathbf{b}(i)}}\right)_{D}$. Therefore, we can apply the Lemma \ref{lemma:general} (to two different pairs of sequences of functions) and with any $\epsilon > 0$. We now have the following sequence of equations: 
\begin{equation*}
\lim_{D} \mu^{1}_i \otimes \nu^{1}_i (\varphi(x,y,\mathbf{b}(i))) = \lim_{D} \left( \int_{S_{y}(M_i)}F_{\mu^{1}_i,M_i}^{\varphi_{\mathbf{b}(i)}} d\nu^{1}_i \right)
\end{equation*} 
\begin{equation*}
\overset{(a)}{=}  \lim_{D} \left( \int_{S_{y}(M_i)}F_{\mu^{2}_i,M_i}^{\varphi_{\mathbf{b}(i)}} d\nu^{1}_i \right) \overset{(b)}{=}  \lim_{D} \left( \int_{S_{y}(M_i)}F_{\nu^{1}_i,M_i}^{\varphi^{*}_{\mathbf{b}(i)}} d\mu^{2}_i \right)
 \end{equation*}
  \begin{equation*}
\overset{(a)}{=}  \lim_{D} \left( \int_{S_{y}(M_i)}F_{\nu^{2}_i,M_i}^{\varphi^{*}_{\mathbf{b}(i)}} d\mu^{2}_i \right) \overset{(b)}{=}  \lim_{D} \left( \int_{S_{y}(M_i)}F_{\mu^{2}_i,M_i}^{\varphi_{\mathbf{b}(i)}} d\nu^{2}_i \right)
 \end{equation*}
 \begin{equation*} 
 = \lim_{D}\mu_i^{2} \otimes \nu_i^{2}(\varphi(x,y,\mathbf{b}(i))). 
 \end{equation*} 
These equations are justified by the following:
 \begin{enumerate}[$(a)$]
 \item Lemma \ref{lemma:general} applied with arbitrarily small $\epsilon$. 
 \item This follows from condition (3) of our hypothesis. \qedhere
 \end{enumerate} 
\end{proof} 

\begin{proposition} The pseudo-finite product $\boxtimes$ is well-defined. 
\end{proposition}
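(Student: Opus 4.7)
The plan is to directly invoke Lemma \ref{product:2}. Suppose $(\mu^{1}_i)_{i \in I}, (\mu^{2}_i)_{i \in I}$ and $(\nu^{1}_i)_{i \in I}, (\nu^{2}_i)_{i \in I}$ are two pairs of witness sequences for the pseudo-finiteness of $\mu \in \mathfrak{M}_{x}(\mathcal{M})$ and $\nu \in \mathfrak{M}_{y}(\mathcal{M})$ respectively, so $\mu^{k}_i \in \conv_{x}(M_i)$, $\nu^{k}_i \in \conv_{y}(M_i)$, and $\lim_D \mu^{k}_i = \mu$, $\lim_D \nu^{k}_i = \nu$ for $k = 1, 2$. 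To show $\boxtimes$ is well-defined, it suffices to show that $\lim_D (\mu^{1}_i \otimes \nu^{1}_i) = \lim_D (\mu^{2}_i \otimes \nu^{2}_i)$ as measures in $\mathfrak{M}_{xy}(\mathcal{M})$. Condition $(1)$ of Lemma \ref{product:2} is immediate.

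For condition $(2)$, I would verify that any element of $\conv_{x}(M_i)$ is $M_i$-definable. If $\mu^{k}_i = \sum_{l=1}^{n} r_l \delta_{\tp(a_l/M_i)}$, then for any $\mathcal{L}$-formula $\varphi(x,y)$ and $b \in M_i^{y}$, we have $\mu^{k}_i(\varphi(x,b)) = \sum_{l=1}^{n} r_l \cdot \mathbf{1}[M_i \models \varphi(a_l,b)]$. As a function on $S_{y}(M_i)$ this only depends on which of the clopen sets $[\varphi(a_l,y)]$ the type $\tp(b/M_i)$ lies in, so it is a locally constant (hence continuous) map $F^{\varphi}_{\mu^{k}_i, M_i} : S_{y}(M_i) \to [0,1]$. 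The same argument applies to each $\nu^{k}_i$.

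For condition $(3)$, this is precisely the content of Remark \ref{product}: for any $\mu^{2}_i \in \conv_{x}(M_i)$ and $\nu^{k}_i \in \conv_{y}(M_i)$, the Morley products $\mu^{2}_i \otimes \nu^{k}_i$ and $\nu^{k}_i \otimes \mu^{2}_i$ both reduce to the same finite double sum $\sum_{l,j} r_l s_j \delta_{(a_l,b_j)}$, and hence agree. With all three hypotheses of Lemma \ref{product:2} verified, we conclude $\lim_D (\mu^{1}_i \otimes \nu^{1}_i) = \lim_D (\mu^{2}_i \otimes \nu^{2}_i)$, so the pseudo-finite product is independent of the choice of approximating sequences.

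Essentially there is no main obstacle: all the technical work has already been done in Lemma \ref{product:2} and Remark \ref{product}. The only point that requires a brief remark is the automatic $M_i$-definability of measures in $\conv_{x}(M_i)$, which follows because such measures assign values determined by finitely many clopen atoms.
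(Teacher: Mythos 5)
Your proof is correct and follows essentially the same route as the paper: reduce well-definedness to showing $\lim_D(\mu^1_i\otimes\nu^1_i)=\lim_D(\mu^2_i\otimes\nu^2_i)$, observe that measures concentrating on finitely many realized types are definable and that their Morley products commute by Remark \ref{product}, and then apply Lemma \ref{product:2}. The only difference is that you spell out the definability of measures in $\conv_x(M_i)$ in slightly more detail, which the paper leaves as an observation.
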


\begin{proof} It suffices to argue that if $\mu \in \mathcal{M}_{x}(\mathcal{M})$, $\nu \in \mathfrak{M}_{y}(\mathcal{M})$, $\mu^{1}_i,\mu^{2}_i \in \conv_{x}(M_i)$, $\nu^{1}_i,\nu^{2}_i \in \conv_{y}(M_i)$, $\lim_{D} \mu^{1}_i = \lim_{D} \mu^{2}_i = \mu$, and  $\lim_{D} \nu_i^{1} = \lim_{D} \nu_i^{2} = \nu$ then $\lim_{D} (\mu_i \otimes \nu_i) = \lim_{D} (\mu_i \otimes \nu_i)$. Notice that every measure which concentrates on finitely many realized types is definable. By Remark \ref{product}, all necessary products commute. Hence we can apply Lemma \ref{product:2} and the proof is complete. 
\end{proof}

We now connect the Morley product and the pseudo-finite product and derive the main result of this section.

\begin{theorem}\label{MT2} Let $(M_i)_{i \in I}$ be a indexed family of $\mathcal{L}$-structures, $D$ be an ultrafilter on $I$, and $\mathcal{M} := \prod_{D} M_i$. Let $\mu \in \mathfrak{M}_{x}(\mathcal{M})$ and $\nu \in \mathfrak{M}_{y}(\mathcal{M})$. Suppose that both $\mu$ and $\nu$ are pseudo-finite. If $\mu$ is definable, then $\mu \otimes \nu = \mu \boxtimes \nu$.
\end{theorem}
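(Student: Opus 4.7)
The plan is to compare both products to a common finite step-function approximation built from the definability schema of $\mu$ on $\mathcal{M}$. Fix a formula $\varphi(x,y,\mathbf{c}) \in \mathcal{L}_{xy}(\mathcal{M})$ and $\epsilon > 0$; it suffices to show $|\mu \otimes \nu(\varphi(x,y,\mathbf{c})) - \mu \boxtimes \nu(\varphi(x,y,\mathbf{c}))| \leq \epsilon$, and I pick $n$ with $3/n < \epsilon$. Since $\mu$ is $\mathcal{M}$-definable, Proposition \ref{prop:def}, applied to the $\mathcal{L}$-formula $\varphi(x,(y,z))$ and then specialized at $z=\mathbf{c}$, produces $\mathcal{L}_y(\mathcal{M})$-formulas $d_{0,n}(y,\mathbf{e}),\dots,d_{n,n}(y,\mathbf{e})$ covering $\mathcal{M}^y$, which I refine to a partition $\{e_{k,n}(y,\mathbf{e})\}_{k=0}^n$ via $e_{k,n} := d_{k,n} \wedge \bigwedge_{j<k}\neg d_{j,n}$. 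The ``level set'' property persists: $\mathcal{M} \models e_{k,n}(\mathbf{b},\mathbf{e})$ implies $|\mu(\varphi(x,\mathbf{b},\mathbf{c})) - k/n| < 1/n$. Hence $F^{\varphi(\cdot,\cdot,\mathbf{c})}_{\mu,\mathcal{M}}$ differs uniformly by at most $1/n$ from the step function $\sum_{k=0}^n (k/n)\mathbf{1}_{[e_{k,n}]}$ on $S_y(\mathcal{M})$, and integrating against $\tilde{\nu}$ yields
\begin{equation*}
\mu \otimes \nu(\varphi(x,y,\mathbf{c})) \approxeq_{1/n} \sum_{k=0}^n \frac{k}{n}\cdot \nu(e_{k,n}(y,\mathbf{e})).
\end{equation*}

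Next, fix realizing sequences $\mu_i \in \conv_x(M_i)$, $\nu_i \in \conv_y(M_i)$ with $\lim_D \mu_i = \mu$ and $\lim_D \nu_i = \nu$. Each $\mu_i,\nu_i$ is $M_i$-definable (it concentrates on realized types), so $\mu_i \otimes \nu_i(\varphi(x,y,\mathbf{c}(i))) = \int_{S_y(M_i)} h_i \, d\nu_i$ where $h_i := F^{\varphi(\cdot,\cdot,\mathbf{c}(i))}_{\mu_i,M_i}$. Let $g_i : S_y(M_i) \to [0,1]$ be the continuous step function
\begin{equation*}
g_i(\tp(b/M_i)) := \sum_{k=0}^n \frac{k}{n}\cdot \mathbf{1}\bigl[M_i \models e_{k,n}(b,\mathbf{e}(i))\bigr].
\end{equation*}
For any $\mathbf{a} \in \mathcal{M}$, \strokeL o\'s's theorem gives $g_D(\mathbf{a}) = k(\mathbf{a})/n$, where $k(\mathbf{a})$ is the unique index with $\mathcal{M} \models e_{k(\mathbf{a}),n}(\mathbf{a},\mathbf{e})$, while $h_D(\mathbf{a}) = \lim_D \mu_i(\varphi(x,\mathbf{a}(i),\mathbf{c}(i))) = \mu(\varphi(x,\mathbf{a},\mathbf{c}))$; Proposition \ref{prop:def} then forces $|g_D(\mathbf{a}) - h_D(\mathbf{a})| \leq 1/n$ uniformly in $\mathbf{a}$. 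Lemma \ref{lemma:general} (applied with any $\epsilon' > 1/n$ and then taking the infimum) therefore yields
\begin{equation*}
\mu \boxtimes \nu(\varphi(x,y,\mathbf{c})) = \lim_D \int h_i\,d\nu_i \approxeq_{1/n} \lim_D \int g_i\,d\nu_i = \sum_{k=0}^n \frac{k}{n}\cdot \nu(e_{k,n}(y,\mathbf{e})),
\end{equation*}
the last equality using $\nu = \lim_D \nu_i$ and Fact \ref{computation}. Combining the two approximations, $|\mu\otimes\nu(\varphi) - \mu\boxtimes\nu(\varphi)| \leq 2/n < \epsilon$, and since $\epsilon$ was arbitrary, $\mu \otimes \nu = \mu \boxtimes \nu$.

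The main obstacle is precisely the uniform estimate $\sup_{\mathbf{a} \in \mathcal{M}}|g_D(\mathbf{a}) - h_D(\mathbf{a})| \leq 1/n$, which is the bridge transferring the \emph{external} definition data of $\mu$ on $\mathcal{M}$ (packaged in the partition $\{e_{k,n}\}$ supplied by Proposition \ref{prop:def}) down to the \emph{internal} Morley products $\mu_i \otimes \nu_i$ on each coordinate $M_i$. Everything else is a routine discretization plus two applications of the integration estimate in Lemma \ref{lemma:general}, and no NIP hypothesis enters at any point.
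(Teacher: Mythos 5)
Your proof is correct and follows essentially the same route as the paper's: discretize $F^{\varphi_{\mathbf{c}}}_{\mu,\mathcal{M}}$ by a definable step function, push the defining formulas down to the $M_i$ via \L o\'s, and compare the two integrals with Lemma \ref{lemma:general}. The only cosmetic difference is that you extract the step function (with levels $k/n$) from Proposition \ref{prop:def} and refine to a partition, whereas the paper simply asserts the existence of a uniform step-function approximation $\sum_j r_j \mathbf{1}_{\psi_j}$ directly from definability.
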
 

\begin{proof} Fix a formula $\varphi(x,y,\mathbf{d}) \in \mathcal{L}_{xy}(\mathcal{M})$, $\epsilon > 0$, and let $[(d_i)]_{D} = \mathbf{d}$. Choose $(\mu_i)_{i \in I}$ and $(\nu_i)_{i \in I}$ such that $\mu_i \in \conv_{x}(M_i)$, $\nu_i \in \conv_{y}(M_i)$,  $\lim_{D} \mu_i = \mu$, and $\lim_{D} \nu_i = \nu$.  Since $\mu$ is definable, there exists real numbers $r_1,...,r_n$ and $\mathcal{L}_{y}(\mathcal{M})$-formulas $\psi_{1}(y,\mathbf{a}_1),...,\psi_{n}(y,\mathbf{a}_n)$ which partition $\mathcal{M}^{y}$ such that 
\begin{equation*} 
\sup_{q \in S_{y}(\mathcal{M})}| F_{\mu,\mathcal{M}}^{\varphi}(q) - \sum_{j=1}^{n} r_j \mathbf{1}_{\psi_{j}(y,\mathbf{a}_j)}(q)| < \epsilon. \eqno{(\dagger)}
\end{equation*} 
Let $f_i := \sum_{j=1}^{n} r_j \mathbf{1}_{\psi_j(y,\mathbf{a}_j(i))}$ and so $f_i : S_{y}(M_i) \to [0,1]$.  Then we have that 
\begin{equation*} \mu \otimes \nu(\varphi(x,y,\mathbf{d})) = \int_{S_{y}(\mathcal{M})} F_{\mu,\mathcal{M}}^{\varphi_{\mathbf{d}}}d\nu \approx_{\epsilon} \int_{S_{y}(\mathcal{M})} \sum_{j=1}^{n} r_j \mathbf{1}_{\psi_j(y,\mathbf{a}_{j})} d\nu = \sum_{i=1}^{n} r_j \nu(\psi_j(y,\mathbf{a}_{j}))
\end{equation*}
\begin{equation*}
=  \sum_{j=1}^{n} r_j \left( \lim_{D} \nu_{i}(\psi_j(y,\mathbf{a}_{j}(i))) \right) = \lim_{D} \left( \sum_{j=1}^{n} r_j \nu_{i}(\psi_{j}(y,\mathbf{a}_j(i)) \right)
\end{equation*} 
\begin{equation*}
 = \lim_{D} \left( \int_{S_{y}(M_{i})} \sum_{j=1}^{n} r_j \mathbf{1}_{\psi_j(y,\mathbf{a}_j(i))} d\nu_i \right) = \lim_{D} \left( \int_{S_{y}(M_i)} f_i d\nu_i \right),
\end{equation*} 
\begin{equation*} 
\overset{(*)}{\approxeq_{\epsilon}} \lim_{D} \left( \int_{S_{y}(M_i)}F_{\mu_i,M_i}^{\varphi_{\mathbf{d}(i)}} d\nu_i\right) = \mu \boxtimes \nu (\varphi(x,y,\mathbf{d})). 
\end{equation*} 
We justify equation $(*)$ using Lemma \ref{lemma:general}. Clearly for each $i \in I$, we have that $F_{\mu_i,M_i}^{\varphi_{\mathbf{d}(i)}}$ and $f_i$ are continuous. Moreover, for any $\mathbf{c} \in \mathcal{M}$, 
\begin{equation*} 
|(F_{\mu_i,M_i}^{\varphi_{\mathbf{d}(i)}})_{D}(\mathbf{c}) - f_{D}(\mathbf{c})| = |\mu(\varphi(x,\mathbf{c},\mathbf{d})) - \sum_{j=1}^{n} r_j \mathbf{1}_{\psi_{j}(y,\mathbf{a}_j)}(\tp(\mathbf{c}/\mathcal{M}))| < \epsilon. 
\end{equation*} 
The first equation above is a direct consequence of the definitions (left term) and \strokeL o\'s's theorem (right term). Hence Lemma \ref{lemma:general} applies and approximation $(*)$ is justified. 
\end{proof} 

\begin{corollary}\label{fun}Let $(M_i)_{i \in I}$ be a indexed family of $\mathcal{L}$-structures, $D$ be an ultrafilter on $I$, and $\mathcal{M} := \prod_{D} M_i$. Let $\mu \in \mathfrak{M}_{x}(\mathcal{M})$ and $\nu \in \mathfrak{M}_{y}(\mathcal{M})$. Suppose that both $\mu$ and $\nu$ are pseudo-finite. If $\mu$ and $\nu$ are both definable, then $\mu \otimes \nu = \nu \otimes \mu$.
\end{corollary}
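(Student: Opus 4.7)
The plan is to deduce the corollary almost immediately from Theorem \ref{MT2} together with Remark \ref{product}. The key observation is that the pseudo-finite product $\boxtimes$ is manifestly symmetric, because at each coordinate $i$ the measures $\mu_i, \nu_i$ live in $\conv_x(M_i)$ and $\conv_y(M_i)$ respectively, so $\mu_i \otimes \nu_i = \nu_i \otimes \mu_i$ by Remark \ref{product}; taking ultralimits of both sides preserves this equality.

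More precisely, I would argue as follows. Since $\mu$ is definable and $\mu, \nu$ are pseudo-finite, Theorem \ref{MT2} gives $\mu \otimes \nu = \mu \boxtimes \nu$. Since $\nu$ is definable and $\mu, \nu$ are pseudo-finite, the same theorem applied with the roles of $\mu$ and $\nu$ swapped gives $\nu \otimes \mu = \nu \boxtimes \mu$. Thus the corollary reduces to showing $\mu \boxtimes \nu = \nu \boxtimes \mu$.

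For that, pick sequences $(\mu_i)_{i \in I}$ and $(\nu_i)_{i \in I}$ with $\mu_i \in \conv_x(M_i)$, $\nu_i \in \conv_y(M_i)$, $\lim_D \mu_i = \mu$ and $\lim_D \nu_i = \nu$. For any $\varphi(x,y,\mathbf{c}) \in \mathcal{L}_{xy}(\mathcal{M})$, Remark \ref{product} yields
\begin{equation*}
\mu_i \otimes \nu_i(\varphi(x,y,\mathbf{c}(i))) = \nu_i \otimes \mu_i(\varphi(x,y,\mathbf{c}(i)))
\end{equation*}
for every $i \in I$. Taking $\lim_D$ of both sides and applying Definition \ref{def:pf} gives $\mu \boxtimes \nu(\varphi(x,y,\mathbf{c})) = \nu \boxtimes \mu(\varphi(x,y,\mathbf{c}))$, so $\mu \boxtimes \nu = \nu \boxtimes \mu$ as desired.

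There is essentially no obstacle here — the content is entirely packaged inside Theorem \ref{MT2}. The only thing one must be slightly careful about is that the definition of the pseudo-finite product is symmetric in $\mu$ and $\nu$ at the level of the defining sequences (which it is, via Remark \ref{product}), so that $\mu \boxtimes \nu$ and $\nu \boxtimes \mu$ can be directly compared.
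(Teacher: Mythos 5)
Your proof is correct and follows exactly the same route as the paper: apply Theorem \ref{MT2} on each side (using definability of $\mu$ for $\mu\otimes\nu=\mu\boxtimes\nu$ and of $\nu$ for $\nu\otimes\mu=\nu\boxtimes\mu$), and then use Remark \ref{product} to commute the coordinatewise products before taking the ultralimit. Nothing is missing.
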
 
\begin{proof} Fix a formula $\varphi(x,y,\mathbf{d}) \in \mathcal{L}_{xy}(\mathcal{M})$. Let $\mu_i \in \conv_{x}(M_i)$ and $\nu_i \in \conv_{y}(M_i)$ such that $\lim_{D} \mu_i = \mu$ and $\lim_{D} \nu_i = \nu$. Then Theorem \ref{MT2}  and Remark \ref{product} yield the following, 
\begin{equation*} \mu \otimes \nu(\varphi(x,y,\mathbf{d})) = \mu \boxtimes \nu(\varphi(x,y,\mathbf{d})) = \lim_{D} \mu_i \otimes \nu_i(\varphi(x,y,\mathbf{d}(i))) 
\end{equation*} 
\begin{equation*} = \lim_{D} \nu_i \otimes \mu_i(\varphi(x,y,\mathbf{d}(i))) = \nu \boxtimes \mu (\varphi(x,y,\mathbf{d})) = \nu \otimes \mu(\varphi(x,y,\mathbf{d})). \qedhere
\end{equation*} 
\end{proof} 

\begin{remark}\label{remark:anti} The previous results goes through \textit{locally}. In particular, suppose that $\varphi(x,y)$ is an $\mathcal{L}$-formula, $(M_i)_{i \in I}$ is an indexed family of $\mathcal{L}$-structures, $D$ is an ultrafilter on $I$, and $\prod_{D} M_i = \mathcal{M}$. Suppose that $\mu$ and $\nu$ are pseudo-finite and $\mu$ is $\varphi$-definable; i.e. we only assume that the map $F_{\mu,\mathcal{M}}^{\varphi}: S_{y}(\mathcal{M}) \to [0,1]$ is well-defined and continuous. 
\begin{enumerate} 
\item Then $\mu \otimes \nu(\varphi(x,y)) = \mu \boxtimes \nu(\varphi(x,y))$. 
\item If $\nu$ is $\varphi^{*}$-definable, then $\mu \otimes \nu(\varphi(x,y)) = \nu \otimes \mu(\varphi(x,y))$. 
\end{enumerate} 
\end{remark}

\subsection{An application} Remark \ref{remark:anti} can be used to show that many measures are \textit{not} pseudofinite (with respect to a particular sequence of $\mathcal{L}$-structures and choice of ultrafilter). Here we consider the Paley graphs and demonstrate that the ``unfair coin-flipping measures" are not pseudo-finite in this context. 

\begin{definition}[Paley Graphs] Let $(q_i)_{i \in \mathbb{N}}$ be the sequence of primes such that $q_i = 1$ mod $4$. Let $\mathbb{F}_{q_i}$ be the finite field of size $q_i$ and  $G_{q_i} = (\mathbb{F}_{q_i}; R)$ where $G_{q_i} \models R(a,b)$ if and only if there exists some $z \in \mathbb{F}_{q_i}$ such that $z^{2} = a -b$. We remark that $G_{q_i}$ is a graph. 
\end{definition}  

\begin{fact}\label{fact:2} The following statements are true. 
\begin{enumerate} 
\item For any non-principal ultrafilter $D$ on $I$, $\prod_{D} G_{q_i}$ is a model of the random graph \cite[Example 3.4]{Mac}.
\item For each $q_i$ and $a \in G_{q_i}$, the degree of $a$ is $\frac{q_i - 1}{2}$ \cite{wiki}.
\end{enumerate} 
\end{fact}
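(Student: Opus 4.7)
The plan is to handle part (2) by direct counting and part (1) by reducing the random graph axioms, via \strokeL o\'s's theorem, to a classical character-sum estimate. For part (2), recall that the set $Q$ of nonzero quadratic residues in $\mathbb{F}_{q_i}$ is the image of $z\mapsto z^{2}$ on $\mathbb{F}_{q_i}^{*}$, a two-to-one map with kernel $\{\pm 1\}$, so $|Q|=(q_i-1)/2$. Because $q_i\equiv 1\pmod 4$ the exponent $(q_i-1)/2$ is even, so $-1$ is a square, which makes the relation $R$ symmetric and (with the standing convention that $z$ is nonzero) irreflexive. The neighborhood of any vertex $a$ is the translate $a+Q$, which has the same cardinality as $Q$, namely $(q_i-1)/2$.

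For part (1), the theory of the random graph is axiomatized by the graph axioms together with, for each $k\in\mathbb{N}$, the extension axiom $\sigma_{k}$ asserting that whenever $a_{1},\dots,a_{k},b_{1},\dots,b_{k}$ are pairwise distinct there exists $z$ adjacent to every $a_{l}$ and to no $b_{l}$. The basic graph axioms transfer to the ultraproduct from part (2) via \strokeL o\'s's theorem, so the nontrivial content is each $\sigma_{k}$. By \strokeL o\'s it suffices to show that $\{i\in\mathbb{N}:G_{q_i}\models\sigma_{k}\}\in D$, and since $(q_i)_{i\in\mathbb{N}}$ is an infinite sequence of distinct primes and $D$ is non-principal, it is enough to verify $G_{q_i}\models\sigma_{k}$ for all sufficiently large $q_i$ (depending on $k$).

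To count witnesses, let $\chi$ denote the quadratic character of $\mathbb{F}_{q_i}$ extended by $\chi(0)=0$, and fix disjoint $k$-sets $A=\{a_{1},\dots,a_{k}\}$ and $B=\{b_{1},\dots,b_{k}\}$. The number of $z\in\mathbb{F}_{q_i}\setminus(A\cup B)$ adjacent to every $a_{l}$ and to no $b_{l}$ equals
\begin{equation*}
\sum_{z\notin A\cup B}\ \prod_{l=1}^{k}\frac{1+\chi(z-a_{l})}{2}\cdot\prod_{l=1}^{k}\frac{1-\chi(z-b_{l})}{2}.
\end{equation*}
Expanding the product yields a main term of $q_i/4^{k}$ together with at most $4^{k}-1$ error sums of the form $\sum_{z}\chi(f(z))$, where $f$ is a squarefree polynomial of degree at most $2k$. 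By the Weil bound each such sum is at most $(2k-1)\sqrt{q_i}$, so the total count is $q_i/4^{k}+O_{k}(\sqrt{q_i})$, which is strictly positive once $q_i$ is large. This establishes $\sigma_{k}$ in all sufficiently large $G_{q_i}$, hence in $\prod_{D} G_{q_i}$. The main obstacle is invoking the Weil bound for character sums over curves; once that is accepted as a black box, the rest is routine bookkeeping with \strokeL o\'s's theorem.
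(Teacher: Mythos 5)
Your proposal is correct, but note that the paper does not actually prove this statement: it is recorded as a \emph{Fact} with citations (part (1) to Macpherson--Steinhorn and part (2) to a reference on Paley graphs), so there is no in-paper argument to compare against. What you supply is the standard self-contained proof. Part (2) is exactly the expected counting argument, and you rightly flag that $q_i\equiv 1\pmod 4$ is what makes $-1$ a square and hence $R$ symmetric (and that one must read the definition as excluding $z=0$ to avoid loops, which the paper glosses over when it ``remarks'' that $G_{q_i}$ is a graph). Part (1) is the classical quasi-randomness argument: reduce via \strokeL o\'s's theorem and non-principality of $D$ to verifying each extension axiom in all sufficiently large Paley graphs, then count witnesses with the quadratic character and control the error with the Weil bound. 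Two small bookkeeping points you may want to make explicit: the extension axioms should allow $|A|\neq|B|$, which follows from your symmetric version $\sigma_k$ by padding with extra distinct vertices; and the main term is $(q_i-2k)/4^{k}$ rather than $q_i/4^{k}$ since you sum over $z\notin A\cup B$, though this is absorbed into the $O_k(\sqrt{q_i})$ error. The trade-off is clear: the paper's citation route is shorter, while your argument is self-contained modulo the Weil bound, which is a genuinely deep input and is appropriately treated as a black box.
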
 

\begin{definition} Let $M$ be a model of the random graph. We say that a measure $\nu$ is an unfairly weighted coin-flipping measure if $\nu = \nu_{p}$ where $p \in [0,1] \backslash \{1/2\}$ and for any finite sequence of distinct singletons $a_1,...,a_n,b_1,...,b_m$, 
\begin{equation*}
    \nu_{p}\left( \bigwedge_{i=1}^{n} R(x,a_i) \wedge \bigwedge_{j=1}^{m} \neg R(x,b_j) \right) = \left(\frac{1}{p}\right)^{n} \left(\frac{1}{1-p}\right)^{m}. 
\end{equation*}
By quantifier elimination, $\nu_{p}$ is unique.
\end{definition}

\begin{theorem}\label{Paley} Suppose that $\mathcal{L} = \{R\}$, $I = \mathbb{N}$, $M_i = G_{q_i}$, $D$ is a non-principal ultrafilter on $I$, and $\mathcal{M} : = \prod_{D} M_i$. We let $\mu$ be the pseudo-finite counting measure, i.e. $\mu = \lim_{D} \mu_i$ where for each $\varphi(x) \in \mathcal{L}_{x}(M_i)$, $\mu_i(\varphi(x)) = \frac{|\{a \in M_i: M_i \models \varphi(a)\}| }{|M_i|}$. 
\begin{enumerate}
\item The measure $\mu$ is $R$-definable and in particular $F_{\mu,\mathcal{M}}^{R}$ is the constant function which outputs $1/2$.
\item For any measure $\nu$ such that $F_{\nu,\mathcal{M}}^{R}$ is a constant function not equal to 1/2, $\nu$ is not pseudo-finite (with respect to this indexing family and ultrafilter). In particular, any unfairly weighted coin-flipping measures is not pseudo-finite in this context.
\end{enumerate}
\end{theorem}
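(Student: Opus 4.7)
The plan is to handle part (1) by a direct counting calculation in each Paley graph $G_{q_i}$, and then to derive part (2) by invoking the local commutativity statement of Remark \ref{remark:anti}(2) and observing that the two Morley products return different constants.

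For part (1), I would fix $\mathbf{b} \in \mathcal{M}$ with representative sequence $(b_i)_{i \in \mathbb{N}}$ and use Fact \ref{fact:2}(2), which gives that every vertex in $G_{q_i}$ has degree $(q_i-1)/2$. Then
\[
\mu_i(R(x, b_i)) \;=\; \frac{(q_i-1)/2}{q_i} \;=\; \frac{1}{2} - \frac{1}{2 q_i}.
\]
Since $D$ is non-principal on $\mathbb{N}$ and there are infinitely many primes congruent to $1$ mod $4$, the set $\{i : q_i > N\}$ is cofinite and hence in $D$ for every $N$, so Fact \ref{computation} gives $\lim_D \mu_i(R(x, b_i)) = 1/2$. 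Hence $\mu(R(x, \mathbf{b})) = 1/2$ for every $\mathbf{b}$. This both displays $F_{\mu, \mathcal{M}}^{R}$ as the constant function $1/2$ (so continuous) and shows $\mu$ is $R$-definable.

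For part (2), suppose for contradiction that $\nu$ is pseudo-finite with $F_{\nu, \mathcal{M}}^R$ equal to a constant $c \neq 1/2$. Because $R$ is a symmetric binary relation, the same hypothesis gives that $F_{\nu, \mathcal{M}}^{R^*}$ is the constant $c$ as well, so $\nu$ is in particular $R^*$-definable. Combining this with part (1), the hypotheses of Remark \ref{remark:anti}(2) are met for $\varphi(x,y) := R(x,y)$, yielding $\mu \otimes \nu(R(x,y)) = \nu \otimes \mu(R(x,y))$. Unwinding both sides via Definition \ref{def:morley} and using that each integrand is a constant:
\[
\tfrac{1}{2} \;=\; \int F_{\mu,\mathcal{M}}^{R}\, d\tilde\nu \;=\; \mu \otimes \nu(R(x,y)) \;=\; \nu \otimes \mu(R(x,y)) \;=\; \int F_{\nu,\mathcal{M}}^{R^*}\, d\tilde\mu \;=\; c,
\]
contradicting $c \neq 1/2$. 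Hence no such pseudo-finite $\nu$ can exist.

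For the ``in particular'' clause, applying the defining recipe for $\nu_p$ with $n=1$, $m=0$ gives $\nu_p(R(x,a)) = p$ for every singleton $a$, so $F_{\nu_p, \mathcal{M}}^R$ is the constant function $p$; since $p \neq 1/2$, the previous paragraph produces the desired non-pseudo-finiteness. I do not foresee any substantial obstacle here; the only delicate point is the variable-swap convention hidden in the passage from $\varphi$-definability of $\nu$ to $\varphi^{*}$-definability, but this collapses because $R$ is symmetric on the Paley graphs.
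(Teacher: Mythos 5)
Your proof is correct and follows essentially the same route as the paper: part (1) via the degree count in Fact \ref{fact:2}(2) and part (2) by contradicting the local commutativity of Remark \ref{remark:anti}. You are in fact slightly more explicit than the paper on the ultralimit computation and on why symmetry of $R$ lets you pass from $R$-definability to $R^{*}$-definability of $\nu$, which is a welcome clarification rather than a deviation.
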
 

\begin{proof} We prove the statements. 
\begin{enumerate}
\item By (3) of Fact \ref{fact:2}, $\mu(R(x,\mathbf{a})) = \frac{1}{2}$ for each $\mathbf{a} \in \mathcal{M}$. Obviously this map can be extended to the constant function $1/2$ over $S_{y}(\mathcal{M})$ which is continuous. 
\item Suppose $\nu$ is pseudo-finite. Assume that for every $\mathbf{a} \in \mathcal{M}$, $\nu(R(x,\mathbf{a})) = r \neq 1/2$. Then $\nu$ is $R^{*}$-definable and 
\begin{equation*} 
\nu \otimes \mu (R(x,y)) = r \neq 1/2 = \mu \otimes \nu(R(x,y)). 
\end{equation*} 
This contradicts Remark \ref{remark:anti}. \qedhere
\end{enumerate}
\end{proof} 

\section{Constructing idempotent measures on monster models of pseudo-finite groups}
We now wish to work in the ``standard'' context for Keisler measures, i.e. measures over a monster model. In this section, we fix a monster model $\mathcal{U}$ in the background and ultraproducts will always be small models of $\mathcal{U}$. The purpose of this section is to construct idempotent generically stable measures over pseudo-finite NIP groups. Idempotent Keisler measures play an interesting role in the model theory of groups and have been previously studied in \cite{ChernGan,ChernGan2}. 

\begin{definition} Suppose that $\mathcal{U}$ is a monster model of a group and $G$ is a small submodel. Suppose that $\mu \in \mathfrak{M}_{x}(\mathcal{U})$, $\nu \in \mathfrak{M}_{y}(\mathcal{U})$. We assume that $|x| = 1$.  
\begin{enumerate}
    \item We say that $\mu$ is definable over $G$ if $\mu|_{G}$ is $G$-definable and for any $\varphi(x,y) \in \mathcal{L}$ and $b \in \mathcal{U}$, $\mu(\varphi(x,b)) = F_{\mu|_{G},G}^{\varphi}(\tp(b/G))$. 
    \item If $\mu$ is definable (over $G$), we let $\mu \tilde{\otimes} \nu$ denote the standard Morley product (for global measures). In particular, for any formula $\varphi(x,y,c) \in \mathcal{L}_{xy}(\mathcal{U})$, we have that 
    \begin{equation*}
    \mu \tilde{\otimes} \nu (\varphi(x,y,b)) = \int_{S_{y}(Gb)} F_{\mu,Gb}^{\varphi_{b}} d(\nu|_{Gb}). 
    \end{equation*} 
    \item If $\mu$ is definable (over $G$) then $\mu * \nu$ is the unique measure in $\mathfrak{M}_{x}(\mathcal{U})$ such that for any formula $\varphi(x) \in \mathcal{L}_{x}(\mathcal{U})$, 
    \begin{equation*}
        \mu * \nu(\varphi(x)) = \mu \tilde{\otimes} \nu(\varphi(x \cdot y)). 
    \end{equation*}
    \item If $\mu$ is definable (over $G$) we say that $\mu$ is idempotent if $\mu * \mu = \mu$.  
\end{enumerate}
\end{definition} 

A proof of the following fact can be found in \cite[Theorem A.24]{CGH}. 

\begin{fact} Suppose that $\mu \in \mathfrak{M}_{x}(M)$ and $\mu$ is $M$-definable. Then there exists a unique measure $\hat{\mu} \in \mathfrak{M}_{x}(\mathcal{U})$ such that $\hat{\mu}|_M = \mu$ and $\hat{\mu}$ is definable over $M$.
\end{fact}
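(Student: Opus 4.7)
The plan is to construct $\hat\mu$ explicitly by evaluating the definition schema of $\mu$ at types over $M$ of parameters drawn from $\mathcal{U}$, and then verify in turn that the resulting assignment is a well-defined measure, restricts correctly, is definable over $M$ in the required sense, and is unique. Precisely, for any formula $\varphi(x,b) \in \mathcal{L}_x(\mathcal{U})$ with $\varphi(x,y) \in \mathcal{L}$ and $b \in \mathcal{U}^y$, I would set
\begin{equation*}
\hat\mu(\varphi(x,b)) := F_{\mu,M}^{\varphi}(\tp(b/M)),
\end{equation*}
where $F_{\mu,M}^{\varphi}\colon S_y(M)\to[0,1]$ is the continuous map witnessing $M$-definability of $\mu$.

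The main work is to show this is well-defined on $\Def_x(\mathcal{U})$ and finitely additive. The uniform key principle is the following: if $\varphi(x,b)$ and $\psi(x,c)$ define the same set in $\mathcal{U}$, then the $\mathcal{L}$-formula $\theta(y,z) := \forall x(\varphi(x,y) \leftrightarrow \psi(x,z))$ holds at $(b,c)$, so $\tp(bc/M) \in [\theta(y,z)] \subseteq S_{yz}(M)$. This clopen is nonempty and hence, by density of realized types, contains a dense subset of types realized by pairs $(b',c') \in M^{yz}$. For each such $(b',c')$ elementarity gives $\varphi(M,b') = \psi(M,c')$, hence $\mu(\varphi(x,b')) = \mu(\psi(x,c'))$. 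Thus the continuous functions $F_{\mu,M}^{\varphi}\circ\pi_y$ and $F_{\mu,M}^{\psi}\circ\pi_z$ on $S_{yz}(M)$ agree on a dense subset of $[\theta(y,z)]$, hence agree on $\tp(bc/M)$. The analogous argument, applied to the $\mathcal{L}$-formula encoding ``$\chi(x,w) \leftrightarrow \varphi(x,y) \vee \psi(x,z)$ and $\varphi(x,y) \wedge \psi(x,z) \equiv \bot$'', yields finite additivity of $\hat\mu$, since additivity holds for $\mu$ on realized types inside the corresponding clopen and transfers by continuity.

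Once $\hat\mu \in \mathfrak{M}_x(\mathcal{U})$ is established, the remaining verifications are short. The equality $\hat\mu|_M = \mu$ is immediate from the definition together with the uniqueness of the continuous extension $F_{\mu,M}^{\varphi}$: for $b \in M^y$, $F_{\mu,M}^{\varphi}(\tp(b/M)) = \mu(\varphi(x,b))$. That $\hat\mu$ is definable over $M$ in the sense of the section's definition follows because $F_{\hat\mu|_M,M}^{\varphi} = F_{\mu,M}^{\varphi}$, and by construction $\hat\mu(\varphi(x,b)) = F_{\mu,M}^{\varphi}(\tp(b/M))$ for every $b \in \mathcal{U}^y$. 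Finally, uniqueness is automatic: any candidate $\hat\mu'$ with the stated properties must satisfy $\hat\mu'(\varphi(x,b)) = F_{\hat\mu'|_M,M}^{\varphi}(\tp(b/M)) = F_{\mu,M}^{\varphi}(\tp(b/M)) = \hat\mu(\varphi(x,b))$.

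The main obstacle is the well-definedness step, since the Boolean algebra $\Def_x(\mathcal{U})$ is presented by formulas up to semantic equivalence and the naive pointwise definition must be shown to respect this. The density-plus-continuity argument above is what makes it work, and once it is in hand, finite additivity and the rest fall out by the same mechanism.
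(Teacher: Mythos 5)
The paper does not prove this statement; it is quoted as a fact with a citation to \cite[Theorem A.24]{CGH}. Your construction --- defining $\hat\mu(\varphi(x,b)) := F_{\mu,M}^{\varphi}(\tp(b/M))$ and verifying well-definedness and finite additivity by the density of realized types in nonempty clopen subsets of $S_{yz}(M)$ together with continuity of the definition maps --- is correct and is essentially the standard argument behind the cited result, with the restriction, definability, and uniqueness claims all following as you indicate.
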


Finally, we recall the classification of idempotent measures over finite groups. This follows immediately from the main results in \cite{KI} and/or \cite{Wendel} since every finite group can be viewed as a compact group under the discrete topology.

\begin{fact}\label{fact:groups} Let $G$ be any finite group and $\mu$ be a measure on $G$. Then the following are equivalent: 
\begin{enumerate} 
\item $\mu \star \mu = \mu$, where $\star$ is the standard convolution product. 
\item There exists a subgroup $H$ of $G$ such that $\mu = \frac{1}{|H|}\sum_{h \in H} \delta_{h}$. 
\end{enumerate} 
\end{fact}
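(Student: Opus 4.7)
The plan is to prove the equivalence directly from scratch, rather than invoking the cited harmonic analysis results, since the finite group case admits an elementary combinatorial argument. I would first prove (2) $\Rightarrow$ (1) and then (1) $\Rightarrow$ (2).

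For the direction (2) $\Rightarrow$ (1), suppose $\mu = \frac{1}{|H|}\sum_{h \in H} \delta_h$ for a subgroup $H \leq G$. I compute the convolution at an arbitrary singleton $\{g\}$:
\begin{equation*}
(\mu * \mu)(\{g\}) = \sum_{s \in G} \mu(\{s\}) \mu(\{s^{-1}g\}) = \frac{1}{|H|^2} |\{s \in H : s^{-1} g \in H\}|.
\end{equation*}
If $g \in H$, the condition $s^{-1}g \in H$ holds for all $s \in H$, giving $(\mu * \mu)(\{g\}) = 1/|H|$. If $g \notin H$, then $s^{-1}g \in H$ would force $g \in sH = H$, a contradiction, so $(\mu * \mu)(\{g\}) = 0$. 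Hence $\mu * \mu = \mu$.

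For the nontrivial direction (1) $\Rightarrow$ (2), let $S = \mathrm{supp}(\mu) = \{g \in G : \mu(\{g\}) > 0\}$, which is nonempty. First I would show $S$ is closed under multiplication: if $h_1, h_2 \in S$, then
\begin{equation*}
(\mu * \mu)(\{h_1 h_2\}) \geq \mu(\{h_1\}) \mu(\{h_2\}) > 0,
\end{equation*}
so $h_1 h_2 \in \mathrm{supp}(\mu * \mu) = S$. Since $G$ is finite and group multiplication is cancellative, any nonempty subset closed under multiplication is automatically a subgroup: for $g \in S$, the powers $g, g^2, g^3, \dots$ eventually cycle, forcing $e \in S$ and $g^{-1} \in S$.

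It remains to show $\mu$ is uniform on $S$. Let $h := \max_{g \in S} \mu(\{g\})$ and pick $g^* \in S$ with $\mu(\{g^*\}) = h$. Then, using that the support of the convolution lies in $S \cdot S \subseteq S$,
\begin{equation*}
h = \mu(\{g^*\}) = (\mu * \mu)(\{g^*\}) = \sum_{s \in S} \mu(\{s\}) \mu(\{s^{-1} g^*\}) \leq h \sum_{s \in S} \mu(\{s\}) = h.
\end{equation*}
Since equality holds throughout and each $\mu(\{s\}) > 0$, we must have $\mu(\{s^{-1} g^*\}) = h$ for every $s \in S$. As $s$ ranges over the subgroup $S$, the element $s^{-1} g^*$ also ranges over all of $S$, so $\mu$ is constantly $h$ on $S$. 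Since $\mu(G) = 1$, this forces $h = 1/|S|$, completing the proof.

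The main obstacle is quite mild in this finite setting; the one substantive step is the maximum-mass argument above, which is the finite analogue of the standard Choquet-style reasoning used in the compact group proofs referenced by Kawada–It\^o and Wendel. Everything else reduces to elementary manipulations with the convolution formula $(\mu * \nu)(\{g\}) = \sum_s \mu(\{s\})\nu(\{s^{-1}g\})$.
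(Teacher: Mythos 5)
Your argument is correct and complete, but it takes a different route from the paper: the paper offers no proof at all for this Fact, instead observing that it ``follows immediately'' from the classification of idempotent probability measures on compact groups due to Kawada--It\^{o} and Wendel, applied to a finite group with the discrete topology. Your proof is a self-contained combinatorial replacement. Both directions check out: the computation for $(2)\Rightarrow(1)$ via the coset dichotomy is right; in $(1)\Rightarrow(2)$ the inequality $(\mu*\mu)(\{h_1h_2\})\geq\mu(\{h_1\})\mu(\{h_2\})$ correctly shows the support $S$ is closed under multiplication, the finiteness-plus-cancellation argument does upgrade this to a subgroup, and the maximum-mass argument (equality in $\sum_{s\in S}\mu(\{s\})\mu(\{s^{-1}g^*\})\leq h$ forcing $\mu(\{s^{-1}g^*\})=h$ for all $s\in S$, with $s^{-1}g^*$ sweeping out all of $S$) is exactly the finite shadow of the extremal argument in the compact-group proofs you mention. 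What your approach buys is independence from the harmonic-analysis literature and a proof that visibly uses nothing beyond the convolution formula and finiteness; what the citation buys the paper is brevity and a pointer to the general compact-group statement, which is the natural home of the result. The only cosmetic issue is your reuse of the letter $h$ both for group elements and for the maximal mass; I would rename the latter.
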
 

\begin{theorem}\label{final} Let $(G_i)_{i \in I}$ be a sequence of groups and $H_i$ be any sequence of finite subgroups where $H_i \leq G_i$. Let $\mathcal{G} = \prod_{D} G_i$, $\mathcal{G} \subset \mathcal{U}$, $\mu_{i} := \frac{1}{|H_i|} \sum_{h \in H_i} \delta_{h}$, and $\mu = \lim_{D} \mu_{i}$. If $\mu$ is $\mathcal{G}$-definable, then $\hat{\mu}$ is idempotent. 
\end{theorem}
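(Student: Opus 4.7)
The plan is to reduce idempotence of $\hat{\mu}$ to the ``finite'' idempotence of each $\mu_i$ via Theorem \ref{MT2}. First I would observe that $\mu$ is pseudo-finite with witnessing sequence $(\mu_i)_{i \in I}$, since each $\mu_i = \tfrac{1}{|H_i|}\sum_{h \in H_i}\delta_h$ lies in $\conv_{x}(G_i)$; moreover each $\mu_i$ is $G_i$-definable (being concentrated on finitely many realized types) and $\mu$ is $\mathcal{G}$-definable by hypothesis. Hence Theorem \ref{MT2} applies to the pair $(\mu,\mu)$ and yields $\mu \otimes \mu = \mu \boxtimes \mu$ in $\mathfrak{M}_{xy}(\mathcal{G})$.

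Next I would establish that each $\mu_i$ is idempotent in the obvious finite sense. Since $H_i \leq G_i$ is a subgroup, multiplication $(g,h) \mapsto gh$ is $|H_i|$-to-one from $H_i \times H_i$ onto $H_i$. Combined with the explicit product formula of Remark \ref{product}, this gives, for every $\theta(x,z) \in \mathcal{L}$ and every $e \in G_i$,
\[
\mu_i \otimes \mu_i(\theta(x \cdot y, e)) = \tfrac{1}{|H_i|^2}\sum_{g,h \in H_i}\mathbf{1}[\theta(gh,e)] = \tfrac{1}{|H_i|}\sum_{k \in H_i}\mathbf{1}[\theta(k,e)] = \mu_i(\theta(x,e)),
\]
which is also the content of Fact \ref{fact:groups} applied to $H_i$.

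The core step then takes the ultralimit. For any $\theta(x,z) \in \mathcal{L}$ and any $c \in \mathcal{G}$ with $c = [(c(i))]_D$, combining the two previous paragraphs gives
\[
\mu \otimes \mu(\theta(x \cdot y, c)) = \mu \boxtimes \mu(\theta(x \cdot y, c)) = \lim_D \mu_i \otimes \mu_i(\theta(x \cdot y, c(i))) = \lim_D \mu_i(\theta(x,c(i))) = \mu(\theta(x,c)).
\]
Because $c \in \mathcal{G}$ forces $\mathcal{G}c = \mathcal{G}$ and $\hat{\mu}|_{\mathcal{G}} = \mu$, this local Morley product over $\mathcal{G}$ coincides with $\hat{\mu}\,\tilde{\otimes}\,\hat{\mu}(\theta(x \cdot y, c))$, so $(\hat{\mu}\ast\hat{\mu})|_{\mathcal{G}} = \mu = \hat{\mu}|_{\mathcal{G}}$.

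Finally, to upgrade this equality of restrictions to the full monster $\mathcal{U}$, I would invoke the uniqueness of the global $\mathcal{G}$-definable extension (the fact quoted just before the theorem): it then suffices to verify that $\hat{\mu}\ast\hat{\mu}$ is itself $\mathcal{G}$-definable. This is folklore --- for $\varphi(x,y,z) \in \mathcal{L}$, the value $\hat{\mu}\,\tilde{\otimes}\,\hat{\mu}(\varphi(x,y,c))$ factors continuously through $\tp(c/\mathcal{G})$ via the restriction map $S_{y}(\mathcal{G}c) \to S_{yz}(\mathcal{G})$ and the defining function $F_{\mu,\mathcal{G}}^{\varphi}$, and convolution merely evaluates this on a formula of the form $\varphi(x \cdot y, z)$. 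I expect this last verification to be the principal technical hurdle, though it introduces no substantially new ideas.
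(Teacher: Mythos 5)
Your proposal is correct and follows essentially the same route as the paper: reduce to equality of restrictions to $\mathcal{G}$ via definability of $\hat{\mu}\ast\hat{\mu}$ over $\mathcal{G}$ and uniqueness of definable extensions, identify $\hat{\mu}\,\tilde{\otimes}\,\hat{\mu}$ over $\mathcal{G}$ with $\mu\otimes\mu$, apply Theorem \ref{MT2} to pass to $\boxtimes$, and use idempotence of each finite $\mu_i$ (Fact \ref{fact:groups}). The only difference is that the paper cites \cite[Proposition 3.15]{ChernGan} for the $\mathcal{G}$-definability of $\hat{\mu}\ast\hat{\mu}$, the step you flag as the remaining technical hurdle.
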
 

\begin{proof} By definition, $\hat{\mu}$ is the unique definable measure which extends $\mu$ from $\mathcal{L}_{x}(\mathcal{G})$ to $\mathcal{L}_{x}(\mathcal{U})$. By \cite[Proposition 3.15]{ChernGan}, it follows that $\hat{\mu} * \hat{\mu}$ is definable over $\mathcal{G}$. Hence, if $(\hat{\mu} * \hat{\mu})|_{\mathcal{G}} = \hat{\mu}|_{\mathcal{G}}$, then $\hat{\mu} * \hat{\mu} = \hat{\mu}$. Let $\varphi(x) \in \mathcal{L}_{x}(\mathcal{G})$, $\varphi'(x,y) := \varphi(x \cdot y)$, and consider the following computation:
\begin{equation*} 
\hat{\mu} * \hat{\mu}(\varphi(x)) = \hat{\mu}_{x} \tilde{\otimes} \hat{\mu}_{y}(\varphi(x \cdot y)) = \int_{S_{y}(\mathcal{G})} F_{\hat{\mu},\mathcal{G}}^{\varphi'} d(\hat{\mu}|_{\mathcal{G}}) = \int_{S_{y}(\mathcal{G})} F_{\mu,\mathcal{G}}^{\varphi'} d\mu
\end{equation*} 
\begin{equation*} 
=\mu_x \otimes \mu_y (\varphi(x \cdot y)) = \mu_x \boxtimes \mu_y(\varphi(x \cdot y)) = \lim_{D} (\mu_{i} \otimes \mu_{i}(\varphi(x \cdot y)))
\end{equation*} 
\begin{equation*}
= \lim_{D} (\mu_{i} \star \mu_{i}(\varphi(x))) = \lim_{D} ( \mu_{i}(\varphi(x))) = \mu(\varphi(x)) = \hat{\mu}(\varphi(x)). \qedhere
\end{equation*} 
\end{proof}  

\begin{theorem}\label{gsim} Let $(G_i)_{i \in I}$ be a sequence of groups and $H_i$ be any sequence of finite subgroups where $H_i \leq G_i$. Let $\mathcal{G} = \prod_{D} G_i$, $\mathcal{G} \subset \mathcal{U}$, $\mu_{i} := \frac{1}{|H_i|} \sum_{h \in H} \delta_{h}$, and $\mu = \lim_{D} \mu_{i}$. If $\mathcal{G}$ is NIP, then $\hat{\mu}$ is a generically stable idempotent Keisler measure. 
\end{theorem}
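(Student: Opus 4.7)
The plan is to combine Theorem \ref{fact:ultra}, Theorem \ref{final}, and the well-known equivalence (in NIP) between being finitely approximated and being generically stable. First I observe that each $\mu_i = \frac{1}{|H_i|}\sum_{h\in H_i}\delta_h$ concentrates on $|H_i|$ realized types, so $\mu = \lim_D \mu_i \in \mathfrak{M}_x(\mathcal{G})$ is pseudo-finite by definition. Since $\mathcal{G}$ is NIP, Theorem \ref{fact:ultra} gives that $\mu$ is finitely approximated over $\mathcal{G}$, and by Fact \ref{big:fact}(2) $\mu$ is $\mathcal{G}$-definable. Thus $\hat{\mu}$ exists, the hypothesis of Theorem \ref{final} is met, and idempotence $\hat{\mu} * \hat{\mu} = \hat{\mu}$ follows immediately.

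It remains to verify generic stability of $\hat{\mu}$. The natural route is to upgrade the finite approximation of $\mu$ (which only controls parameters from $\mathcal{G}^y$) to finite approximation of $\hat{\mu}$ on all of $\mathcal{U}^y$, using $\mathcal{G}$-definability. Fix $\varphi(x,y) \in \mathcal{L}$ and $\epsilon > 0$, and use finite approximation of $\mu$ to obtain a tuple $\overline{\mathbf{a}} \in \mathcal{G}^x$ of finite length with
\[
\sup_{\mathbf{b}\in\mathcal{G}^y}\bigl|\mu(\varphi(x,\mathbf{b})) - \Av(\overline{\mathbf{a}})(\varphi(x,\mathbf{b}))\bigr| < \epsilon/2.
\]
The map $F_{\mu,\mathcal{G}}^{\varphi}:S_y(\mathcal{G})\to[0,1]$ is continuous by $\mathcal{G}$-definability, and the map $q\mapsto\Av(\overline{\mathbf{a}})(\varphi(x,q))$ is continuous on $S_y(\mathcal{G})$ since it depends only on finitely many formulas with parameters from $\overline{\mathbf{a}}\subseteq\mathcal{G}$. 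These two continuous functions agree to within $\epsilon/2$ on the dense set of realized types in $S_y(\mathcal{G})$, hence agree to within $\epsilon/2$ on all of $S_y(\mathcal{G})$. Since $\hat{\mu}$ is by definition the unique $\mathcal{G}$-definable extension of $\mu$, for every $\mathbf{b}\in\mathcal{U}^y$ we have $\hat{\mu}(\varphi(x,\mathbf{b})) = F_{\mu,\mathcal{G}}^{\varphi}(\tp(\mathbf{b}/\mathcal{G}))$, and similarly $\Av(\overline{\mathbf{a}})(\varphi(x,\mathbf{b}))$ is determined by $\tp(\mathbf{b}/\mathcal{G})$ (indeed by $\tp(\mathbf{b}/\overline{\mathbf{a}})$). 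Therefore
\[
\sup_{\mathbf{b}\in\mathcal{U}^y}\bigl|\hat{\mu}(\varphi(x,\mathbf{b})) - \Av(\overline{\mathbf{a}})(\varphi(x,\mathbf{b}))\bigr| \leq \epsilon/2 < \epsilon,
\]
so $\hat{\mu}$ is finitely approximated as a global measure with approximants drawn from $\mathcal{G}$. Under NIP this is equivalent to generic stability, completing the proof.

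The only non-routine step is the density-plus-continuity lift in the second paragraph, which upgrades approximation of $\mu$ over $\mathcal{G}$ to approximation of $\hat{\mu}$ over $\mathcal{U}$; everything else is a direct citation of Theorems \ref{fact:ultra}, \ref{final}, and Fact \ref{big:fact}. I do not expect any genuine obstacle beyond this bookkeeping, since the NIP hypothesis activates both the pseudo-finite-to-finitely-approximated implication (providing $\mathcal{G}$-definability and thereby the existence of $\hat{\mu}$) and the finitely-approximated-to-generically-stable implication (providing generic stability of the global extension).
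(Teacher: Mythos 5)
Your proposal is correct and takes the same route as the paper, whose entire proof is the single line that the result ``follows directly from Theorem \ref{fact:ultra} and Theorem \ref{final}.'' The density-plus-continuity lift you spell out (transferring the finite approximation of $\mu$ over $\mathcal{G}$ to finite approximation of $\hat{\mu}$ over $\mathcal{U}$, and then invoking the NIP equivalence of finite approximability with generic stability) is exactly the bookkeeping the paper leaves implicit, and you carry it out correctly.
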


\begin{proof}
Follows directly from Theorem \ref{fact:ultra} and Theorem \ref{final}.  
\end{proof}

\begin{question} Does the converse of Theorem \ref{gsim} hold? If $\mathcal{G} = \prod_{D} G_i$ is NIP and $\mu \in \mathfrak{M}_{x}(\mathcal{G})$ is a generically stable idempotent, does there exist a sequence of finite subgrups $H_i$ of $G_i$ such that $\lim_{D} \mu_i = \mu$ where $\mu_i = \frac{1}{|H_{i}|}\sum_{h \in H_{i}} \delta_{h}$. We remark that Theorem \ref{Theorem:pseudo} implies that $\mu$ is pseudo-finite, but does not directly imply that we can find a sequence of measures of the correct form. 

Assume furthermore that the that the normal subgroups of $(G_i)_{i \in I}$ are uniformly definable, i.e. there exists a single formula $\gamma(x,\bar{y})$ such that for any $i \in I$ and normal subgroup $H $ of $G_i$, there exists a parameter $b \in G_i^{y}$ such that $G_i \models \gamma(a,b)$ if and only if $a \in H$. Then one can show that $K: = \{i \in I: \lim_{n \to \infty} \mu_i^{(*n)}$ converges$\}$ is in $D$ (by using the equivalence of $(i)$ and $(v)$ in \cite[Corollary 2.1]{Hognas}). It is routine to check that if $\lim_{n \to \infty} \mu_i^{(*n)}$ converges, then it converges to an idempotent. Consider the sequence of measures 
\begin{equation*}
\nu_{i}:=\begin{cases}
\begin{array}{cc}
\lim_{n\to\infty}\mu_{i}^{(*n)} & i\in K,\\
\delta_{e} & otherwise.
\end{array}\end{cases}
\end{equation*}
Does $\lim_{D} \nu_i = \mu$? If not, under which conditions does the equalty hold? 
\end{question}

\end{document}